\newtheorem{theorem}{Theorem}[section]
\newtheorem{lemma}[theorem]{Lemma}
\newtheorem{propos}[theorem]{Proposition}
\newtheorem{remark}[theorem]{Remark}
\def\RR{{\mathbb{R}}}
\def\TT{{\mathbb  T}}
\def\NN{{\mathbb{N}}}
\newcommand{\eps}{{\varepsilon}}
\newcommand{\Leb}[1]{{\mathscr L}^{#1}} % Misura di Lebesgue
\newcommand{\fR}{{\mathcal R}}
\newcommand{\supp}{{\rm supp}}
\renewcommand{\div}{{\text {div}}\,}
\DeclareMathOperator{\curl}{curl}
\begin{document}

\title[]
{Nonuniqueness of solutions to the Euler equations with vorticity in a Lorentz space}
\author{Elia Bru\'e}
\address[Elia Bru\'e]{
\newline \indent Institute for Advanced Study
\newline \indent 1 Einstein Dr., Princeton NJ 05840, USA}
\email{elia.brue@ias.edu}
\author{Maria Colombo}
\address[Maria Colombo]{
\newline \indent EPFL B, Station 8
\newline \indent CH-1015 Lausanne, CH}
\email{maria.colombo@epfl.ch}
%\author{Camillo De Lellis}
%\address[Camillo De Lellis]{
%\newline \indent School of Mathematics, Institute for Advanced Study and Universit\"at Z\"urich 
%\newline \indent 1 Einstein Dr., Princeton NJ 05840, USA}
%\email{camillo.delellis@math.ias.edu}

\begin{abstract}
For the two dimensional Euler equations, a classical result by Yudovich states that solutions are unique in the class of bounded vorticity; it is a celebrated open problem whether this uniqueness result can be extended in other integrability spaces. We prove in this note that such uniqueness theorem fails in the class of vector fields $u$ with uniformly bounded kinetic energy and  vorticity in the Lorentz space $L^{1, \infty}$.

\end{abstract}
\footnote{MSC classification:  35F50 (35A02 35Q35). Keywords: Euler Equation, vorticity formulation, convex integration, uniqueness.
 %35Q30 (76D03)
}
\maketitle

%\tableofcontents

\section{Introduction}
Let us consider the $2$-dimensional Euler equation
\begin{equation}
\label{eqn:EU}
\begin{dcases}
\partial_t u + \div (u \otimes u ) + \nabla p =0 \\
\div u = 0
\end{dcases}
\end{equation}
where $u : [0,1] \times \TT^2 \to \RR^2$ is the velocity of a fluid and $p : [0,1] \times \TT^2 \to \RR$ the pressure. %$u\in C^0([0,1]; L^2(\TT^2))$ and $\omega := \curl( u) \in C^0([0,1]; L^{1,\infty}(\TT^2))$.
This system can be equivalently rewritten as the two dimensional Euler system in vorticity formulation, which is a transport equation for the vorticity $\omega = {\rm curl} (u)$, i.e.
\begin{equation}\label{e:2deulervort}
\left\{
\begin{array}{l}
\partial_t \omega + u \cdot \nabla \omega=0  \\ 
u = \nabla^\perp \Delta ^{-1} \omega\, .
\end{array}\right.\qquad\mbox{in } \TT^2 \times [0,1] \, .
\end{equation}
% While the literature on this topic is huge both from the PDE point of view and with respect to typical fluid questions, we focus here only on the milestones in the existence and uniqueness theory.
In the latter formulation it is clear that $L^p$ norms of the vorticity are formally conserved for any $p \in[1,\infty]$. For $p>1$, this was used in \cite{DPM87} to prove existence of distributional solutions starting from an initial datum with vorticity in $L^p$. A similar existence result is much more involved for $p=1$, and it was obtained by Delort \cite{Delort91} (see also \cite{DPM87,EvansMuller94}), improving the existence theory up to measure initial vorticities in $H^{-1}$ (this latter condition guarantees finiteness of the energy) whose positive (or negative) part is absolutely continuous.  
As regards uniqueness, the classical result of Yudovich  \cite{Yud62,Yud63} (see also the proof in \cite{MR2246357}) %assumes the initial vorticity to be bounded: it 
states %more precisely 
that, given an initial datum $\omega_0 \in L^\infty$, there exists a unique bounded solution to \eqref{e:2deulervort} starting from $\omega_0$.
%Loeper \cite{MR2246357} provided later a simple and flexible proof of Yudovich Theorem based on optimal transport.
%
%However, the classical problem raised by Yudovich about the sharpness of his result is still open. It is even unknown whether uniqueness holds in the class of $L^2$ velocity with vorticity in $L^\infty(L^p)$, $p\in [1,\infty)$.
%In this paper we show ill-posedness in the Lorentz class $L^{1,\infty}$.
However, the classical problem raised by Yudovich about the sharpness of his result is still open. 
Let $u_0$ be an initial datum in $L^2$ with $\curl u_0$ in some function space $X$. Is the solution of the Euler equation in vorticity formulation unique in the class $L^\infty(X)$?

%Yudovich Theorem guarantees that for $X=L^\infty$, the answer to the previous questions is positive. 
The main result of this paper provides a negative answer when $X$ is the Lorentz space $L^{1,\infty}$.
\begin{theorem}\label{thm: main}
	There exists a nontrivial  solution $u\in C^0([0,1]; L^2(\TT^2))$ to \eqref{eqn:EU} satisfying
	\begin{itemize}
		\item[(i)]  $\omega = \curl(u)\in C^0([0,1]; L^{1,\infty}(\TT^2))$;
		%		 in particular $\omega\in C^0([0,1]; L^{1,\infty}(\TT^2))$ ;
		\item[(ii)] $u(0,\cdot) = 0$.		
	\end{itemize}
\end{theorem}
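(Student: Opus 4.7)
The plan is to prove Theorem~\ref{thm: main} by a convex integration scheme in the vorticity formulation of the 2D Euler equation. I would inductively construct a sequence of smooth approximate solutions $(u_q, p_q, \mathring{R}_q)_{q \ge 0}$ of the Euler--Reynolds system
\[
\partial_t u_q + \div(u_q \otimes u_q) + \nabla p_q = \div \mathring{R}_q, \qquad \div u_q = 0,
\]
starting from $u_0 \equiv 0$ and a smooth traceless symmetric tensor $\mathring{R}_0$ compactly supported in $(\delta, 1] \times \TT^2$ for some $\delta>0$. With a fast-growing frequency sequence $\lambda_q$ and a geometrically decaying amplitude sequence $\delta_q$, the iteration is designed to propagate the bounds
\[
\|u_q\|_{C^0_t L^2_x} \lesssim 1, \quad \|\omega_q\|_{C^0_t L^{1,\infty}_x} \lesssim 1, \quad \|\mathring{R}_q\|_{L^1_{t,x}} \lesssim \delta_{q+1}, \quad \|u_{q+1}-u_q\|_{C^0_t L^2_x} \lesssim \delta_q^{1/2}.
\]

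The perturbation $w_{q+1} = u_{q+1}-u_q$ would be built as a divergence-free superposition of highly localized building blocks arranged on disjoint small disks. A natural candidate is a vortex dipole with vorticity of amplitude $r^{-2}$ on a disk of radius $r$: such a profile has bounded kinetic energy and $L^{1,\infty}$ vorticity norm bounded by a universal constant independent of $r$, since the super-level set $\{|\omega|>\mu\}$ has measure $\pi r^2$ for $\mu<r^{-2}$ and is empty otherwise, hence $\mu|\{|\omega|>\mu\}|\le\pi$ uniformly. The amplitudes, positions and orientations of these dipoles would be tuned so that, after coarse-graining at the new frequency $\lambda_{q+1}$, the Leray projection of $\div(w_{q+1}\otimes w_{q+1})$ cancels $\div\mathring{R}_q$ up to a high-frequency residual, which is then absorbed into $\mathring{R}_{q+1}$ via the standard inverse-divergence operator, in the spirit of the De Lellis--Sz\'ekelyhidi machinery.

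The main obstacle is propagating the uniform $L^{1,\infty}$ vorticity bound through infinitely many iterations. Because $L^{1,\infty}$ is only a quasi-Banach space, even disjointness of supports at a single stage yields only \emph{additive} control of distribution functions, so a naive geometric sum $\sum_q \|\curl w_q\|_{L^{1,\infty}}$ need not be summable. To overcome this I would choose the geometric parameters $(\lambda_q, \delta_q, r_q)$ so that at each stage the new building blocks occupy a set of very small total Lebesgue measure compared to the scale of $\omega_q$ and have amplitudes far larger than $\|\omega_q\|_{L^\infty}$; then the distribution function of $\omega_{q+1}$ is essentially governed, at each height $\mu$, by the \emph{maximum} of the distribution functions of $\omega_q$ and $\curl w_{q+1}$ rather than their sum, since on complementary ranges of $\mu$ only one of the two contributes. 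A careful bookkeeping across scales should then yield a uniform bound on $\|\omega_q\|_{L^{1,\infty}}$ and Cauchy convergence of $\{\omega_q\}$ in $C^0_t L^{1,\infty}_x$.

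The conclusion would then follow in the standard way: the summability of $\delta_q^{1/2}$ yields $u_q \to u$ in $C^0_t L^2_x$, the decay $\mathring{R}_q \to 0$ in $L^1$ shows that $u$ solves \eqref{eqn:EU} in the sense of distributions, and the previous step gives $\omega = \curl u \in C^0_t L^{1,\infty}_x$. The support condition on $\mathring{R}_0$ is preserved along the iteration by placing every building block inside the support of the current stress, so $u_q \equiv 0$ on $[0,\delta/2]$ for all $q$ and thus $u(0,\cdot) = 0$. Nontriviality of the limit is built in by choosing $\mathring{R}_0 \not\equiv 0$ and verifying at each step that the perturbation absorbs most of the existing stress without cancelling the accumulated kinetic energy, which is the standard gluing step in schemes of this kind.
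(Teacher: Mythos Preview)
Your outline has the right skeleton (Euler--Reynolds iteration, highly concentrated building blocks, support propagation for the initial condition), but it misses the central analytic obstruction and the idea that actually overcomes it.

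\textbf{The gap.} For a single-scale profile---your dipole with vorticity of size $r^{-2}$ on a disk of radius $r$---one has simultaneously $\|w\|_{L^2}\sim 1$ and $\|\curl w\|_{L^{1,\infty}}\sim 1$, and these two quantities are rigidly tied together under amplitude and spatial rescaling: if you multiply by $a$ you get $\|aw\|_{L^2}\sim a$ and $\|\curl(aw)\|_{L^{1,\infty}}\sim a$; if you place $N$ disjoint copies at the same scale the ratio only gets worse. Hence, at step $q$ the vorticity increment obeys at best $\|\curl w_{q+1}\|_{L^{1,\infty}}\gtrsim \|w_{q+1}\|_{L^2}\sim\delta_q^{1/2}$, and in fact the constant in front is governed by $\|a_i\|_{L^\infty}\sim\|R_q\|_{L^\infty}^{1/2}$, which does \emph{not} decay along the iteration. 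So with single-scale blocks you cannot make the increment small in $L^{1,\infty}$ at all. Your proposed workaround---arranging the supports of $\curl w_{q+1}$ at amplitudes far above $\|\omega_q\|_{L^\infty}$ so that distribution functions are controlled by a maximum rather than a sum---can at best produce a \emph{uniform} bound $\sup_q\|\omega_q\|_{L^{1,\infty}}<\infty$. It cannot give Cauchy convergence: the tail $\omega_{q'}-\omega_q=\sum_{k>q}\curl w_k$ still has $\|\omega_{q'}-\omega_q\|_{L^{1,\infty}}\sim A_{q+1}m_{q+1}\sim 1$ at the critical level, so $\omega_q$ does not converge in $L^{1,\infty}$ and you lose the $C^0_t L^{1,\infty}_x$ conclusion required by~(i).

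\textbf{What the paper does instead.} The key construction is not a single dipole but a \emph{bundle} of $K$ intermittent jets concentrated at geometrically separated scales $2^{-k}r_\perp$, $k=n_0+1,\dots,n_0+K$, normalised so that each contributes $K^{-1/2}$ to the $L^2$ norm. A direct distribution-function computation (Lemma~\ref{lemma: Lorentz}) then shows that this multi-scale superposition gains a factor $K^{-1/2}$ in the $L^{1,\infty}$ norm of the gradient while keeping $\|W\|_{L^2}\sim 1$; this is precisely the ``atomic decomposition'' flavour of Lorentz spaces, and it is what single-scale blocks cannot provide (indeed the Sobolev--Lorentz inequality $\|w\|_{L^2}\lesssim\|\nabla w\|_{L^{1,2}}$ shows the decoupling is impossible in $L^{1,q}$ for $q\le 2$). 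With this in hand the paper proves an honest increment bound $\|D(u_{q+1}-u_q)\|_{C^0(L^{1,\infty})}\le M\|R_q\|_{L^\infty(L^1)}\lesssim 3^{-q}$, and then the quasi-triangle inequality costs only a factor $2^q$, so $\sum_q 2^q\cdot 3^{-q}<\infty$ and $\omega_q$ is genuinely Cauchy in $C^0_tL^{1,\infty}_x$. The paper also needs the jets to translate in time (so that $\partial_t(W^p+W^c)=\div A$ with $\|A\|_{L^1}$ small) and a combinatorial lemma to keep the four directional families disjoint in space--time; neither mechanism is present in your sketch, and without them the temporal error $\partial_t w_{q+1}$ and the cross terms $W_i\otimes W_j$ would not be controllable in $L^1$.
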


Recently, there have been formidable attempts to disprove this conjecture for $X= L^p$, none of which has by now fully solved it.
Vishik \cite{Vis18,Vis18a} proposed a complex line of approach to this problem, which however has the price of showing nonuniqueness only with an additional degree of freedom, namely a forcing term in the right-hand side of the equation \eqref{e:2deulervort} in the integrability space $L^1(L^p)$. 
The nonuniqueness suggested by this work is of symmetry breaking type%; in other words, the initial datum has a radial symmetry but, in addition to the radially symmetric solution, another one appears which looses this property. 
and, in contrast with the ideas of this paper, his nonuniqueness stems from the linear part of the equation, by carefully choosing an initial datum that sees the instability directions of a linearized operator.

A second attempt has been pursued by Bressan and Shen \cite{BrSh21}, based on numerical experiments which share the symmetry breaking type of nonuniqueness of Vishik. Their work is a first step in the direction of a computer assisted proof.

{Our approach is instead of different nature and stems from the convex integration technique}. The latter was introduced by De Lellis and Sz\'ekelyhidi \cite{DeLellisSzekelyhidi09} in the context of nonlinear PDEs, inspired by the work of Nash on isometric embeddings \cite{Nash}, which found striking applications in recent years to different PDEs (see for instance  \cite{Isett2018Annals,BSV16,MR3352460,MoSz2019AnnPDE,MR4138227,Buckmaster:2021um} and the references quoted therein). 
 As such, our proof would probably be less constructive with respect to the strategies of  \cite{Vis18,Vis18a} and \cite{BrSh21}, where an initial datum for which nonuniqueness is expected is described fairly explicitly as well as the mechanism for the creation of two different singularities. Conversely, the latter approaches see the drawbacks described above and are by no means ``generic'' in the initial data, whereas it is known (see for instance \cite{DRT19typicality,CDRS21typicality}) that convex integration methods yield not only the lack of uniqueness/smoothness for certain specific initial data, but also that solutions are typical (in the Baire category sense). 

%
%\bigskip
%
%
%
%	
%	Recently, {\em Buckmaster and Vicol \cite{BV} found a way to implement a convex integration scheme to the Navier-Stokes system, by introducing the idea of intermittency}: the perturbations are not only fastly oscillating, but also supported on small sets in space. They construct a large class of irregular distributional solutions of the Navier-Stokes equation, with prescribed kinetic energy.
%	
%}
\subsection{Strategy of proof}

The guiding thread of this construction 
%with several fundamental differences in the various instances, 
is an iterative procedure, where one starts from a solution $(u_0, p_0,R_0)$ of the Euler  equations
%(and analogously for Navier-Stokes) 
with an error term in the right-hand side, namely
\begin{equation}
	\label{eqn:EU-R}
	\begin{dcases}
		\partial_t u + \div (u \otimes u ) + \nabla p = \div R \\
		\div u = 0,
	\end{dcases}
\end{equation}
and iteratively corrects this error by adding a fastly oscillating perturbation to the approximate solution. The nonlinear interaction of this perturbation with itself generates a resonance which allows for the cancellation of the previous error; the other terms are mainly seen as new error terms, with smaller size with respect to the previous error.  
More precisely, we define the new solution $(u_1, p_1, R_1)$ by setting
$$
u_1 =u_0+a w_{\lambda} 	\, ,
\quad
w_\lambda(x) := w(\lambda x) \, \, \, \lambda\in \mathbb{Z} \, ,
$$
where $\lambda \gg 1$ is a higher frequency with respect to the typical frequencies in $u_0$,
$w$ is called building block of the construction and enjoys suitable integrability properties, 
%$w_{\lambda} $ is a $1/\lambda$ periodic function (called building block in the following) 
$a$ is a slowly varying coefficient. The cancellation of error happens because the low frequency term in $a^2 w_{\lambda}  \otimes w_{\lambda}$ satisfies
$$
a^2 \int_{\TT^2} w_{\lambda}  \otimes w_{\lambda}  \sim R \, .
$$
This forces us to require that $\int_{\TT^2} |w|^2 =\int_{\TT^2} |w_{\lambda}|^2 \sim 1$.  On the contrary, we wish to control the quantity $\|D u_1\|_{X}$ and for this end we need $\|D w_{\lambda}\|_{X} $ arbitrarily small. This imposes us a restriction on the space $X$
since the Sobolev inequality in Lorentz spaces (see \cite{MR438106}) states that
   \begin{equation}\label{in:alvino}
   	\| u \|_{L^{p^*,q}} \le C(p,q,d) \| \nabla u \|_{L^{p,q}}
   	\quad \text{for $p\in [1,d)$, $q\in [1,\infty]$ and } p^* = dp/(d-p) \, ,
   \end{equation}
giving that
\begin{equation*}
	\| \nabla w_\lambda \|_{L^{1,2}}
	=
	\lambda \| \nabla w \|_{L^{1,2}}
	\gtrsim \lambda  \| w \|_{L^2} 
	\sim \lambda
	\gg 1 \, ,
\end{equation*}
when applied with $p=1$ and $q=2$. In particular, 
%   More precisely the inequality~\eqref{in:alvino} with $p=1$ e $q=2$, shows that, 
with the current method of proof (and in particular with the current way to cancel the error in the iteration), $X= L^1$ or $X= L^{1,2}$ are not allowed; only $X = L^{1,q}$ for  $q>2$ could be obtained. To avoid technicalities, we present the proof with $X= L^{1,\infty}$.

The main novelty in the proof of Theorem~\ref{thm: main} regards the construction of a new family of building blocks. 
They are designed as a bundle of almost solutions to Euler, suitably rescaled and periodized in order to saturate the $L^{1,\infty}$ norm. To this aim we take advantage of intermittent jets, introduced in \cite{BCV18}, and we bundle them in a similar spirit to the atomic decomposition of Lorentz functions. A challenge is to keep different building blocks disjoint in space-time, since we work in two dimensions and since each component of the bundle has its own characteristic speed. We refer the reader to Section \ref{sec:build} for the precise construction and more explanations on our choice of building blocks.

\begin{remark}
The proof of Theorem~\ref{thm: main} is flexible enough, due to the exponential convergence of the iterative sequence, to give $\omega \in L^{1,q}$ for some $q \gg 1$. A technical refinement of the current proof, based on Remark \ref{rmk:improved Lorentz}, would give $q>4$.
\end{remark}

\begin{remark}
A fractional version of the inequality~\eqref{in:alvino}, namely
	\begin{equation}
		\| u \|_{L^{p^*,q}} \le C(p,q,d,s) \| D^s u \|_{L^{p,q}}
		\quad \text{for $s\in (0,1)$, $p\in [1,ds)$, $q\in [1,\infty]$ and } p^* = dp/(d-ps) \, ,
	\end{equation}
   gives that $\| D^s u \|_{L^r} \le \| D^{s-1} \omega \|_{L^r} \le \|  D^{s-1} \omega \|_{L^{\frac{2}{1+s},\infty}}
   \le C \| \omega \|_{L^{1,\infty}} < \infty$,
%   \begin{equation}
%   	\| D^s u \|_{L^r} \le \| D^{s-1} \omega \|_{L^r} \le \|  D^{s-1} \omega \|_{L^{\frac{2}{1+s},\infty}}
%   	\le C \| \omega \|_{L^{1,\infty}} < \infty \, ,
%   \end{equation}
 hence  the vector field $u$ built in Theorem~\ref{thm: main} enjoys the further fractional regularity 
$$
u \in C^0([0,1]; W^{s,\frac{2}{1+s}-\eps}(\TT^2))\qquad \mbox{ for any }s\in (0,1) \mbox{ and }\eps >0 \, .
$$ 
\end{remark}

\subsection*{Acknowledgments}
EB was supported by the Giorgio and Elena Petronio Fellowship at the Institute for Advanced Study. MC was supported by the SNSF Grant 182565. The author wish to thank Camillo De Lellis for interesting discussions on the theme of the paper.

\section{Iteration and Euler-Reynolds system} 
We consider the system of equations  \eqref{eqn:EU-R} in $[0,1] \times \TT$,
%\begin{equation}\label{eqn:EU-R}
%\begin{cases}
%\partial_t u + \div(  u \otimes u) + \nabla p=-\div R
%\\ \\
%\div u=0.
%\end{cases}
%\end{equation}
where $R$ is a traceless symmetric tensor.

As already remarked, our solution to \eqref{eqn:EU} is obtained by passing to the limit solutions of \eqref{eqn:EU-R} with suitable constraints on $u$ and $R$. The latter are built by means of an iterative procedure based on the following.

\begin{propos}\label{prop: iterative}
	There exists $M>0$ such that the following holds. For any smooth solution $(u_0, p_0, R_0)$ of \eqref{eqn:EU-R}, there exists another smooth solution $(u_1,p_1,R_1)$ of \eqref{eqn:EU-R} such that
	\begin{itemize}
		\item[(i)] $\| R_1 \|_{L^\infty(L^1)} \le \frac{1}{3}\| R_0 \|_{L^\infty(L^1)}$ ;
		
		\item[(ii)] $\| u_1 - u_0 \|_{C^0(L^2)} + \|D(u_1 - u_0) \|_{C^0(L^{1,\infty})} \le M \| R_0 \|_{L^\infty(L^1)}$;
%		 and $\|D(u_1 - u_0) \|_{C^0(L^{1,\infty})} \le \eps$ ;
		\item[(iii)] if $R_0(t,\cdot )=0$ in $[0,t_0]$, then $R_1(t,\cdot )=0$ and $u_1(t,\cdot) = u_0(t,\cdot)$ in $[0,t_0/2]$.
	\end{itemize}	
\end{propos}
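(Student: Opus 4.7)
The plan is to carry out one step of the standard convex-integration Nash iteration: define the perturbation $w := u_1 - u_0$ as a sum of high-frequency building blocks whose quadratic self-interaction cancels $R_0$ up to errors that can be absorbed by an inverse-divergence operator $\mathcal{R}$ at the price of a negative power of the oscillation parameter $\lambda$. Property~(iii) will be enforced by multiplying every ingredient in the construction by a temporal cutoff that vanishes on $[0,t_0/2]$ whenever $R_0$ vanishes on $[0,t_0]$; this is compatible with every estimate since the cutoff acts only on slowly-varying amplitudes.

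First I would perform the amplitude construction. After mollifying $(u_0,R_0)$ at a spatial scale $\ell$, I use a standard geometric lemma to decompose the positive-definite tensor $\rho(t)\,\mathrm{Id} - R_\ell(t,x)$, for $\rho(t) \sim \|R_\ell(t,\cdot)\|_{L^1}$, as a finite sum $\sum_\xi a_\xi^2(t,x)\,\xi\otimes\xi$. The amplitudes $a_\xi$ inherit the temporal localization of $\rho$, hence of $R_0$, and enjoy uniform $L^\infty$ bounds in terms of $\|R_0\|_{L^\infty(L^1)}$.

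The crux is the construction, for each direction $\xi$, of a building block $W_\xi$ consisting of a \emph{bundle} of intermittent jets (introduced in~\cite{BCV18}) oriented along $\xi$, rescaled and periodized so that $\mean{\TT^2} W_\xi \otimes W_\xi \approx \xi\otimes\xi$ and $\|W_\xi\|_{L^2} \sim 1$, while $\|DW_\xi\|_{L^{1,\infty}}$ remains small. Layering the bundle in the spirit of the atomic decomposition of Lorentz functions is what lets one saturate the $L^{1,\infty}$ norm rather than the $L^1$ norm, which the Sobolev inequality~\eqref{in:alvino} would forbid. The main obstacle is keeping the different components of the bundle pairwise disjoint in space-time: each jet has its own transport speed along its axis, and in only two spatial dimensions there is little room to separate them; I would resolve this by carefully staggered spatial translations and time shifts, exploiting the small measure of the supports.

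With the principal perturbation $w_p := \sum_\xi a_\xi W_\xi$ in hand, I would add an incompressibility corrector $w_c$ (via a curl/Leray-projection trick) and a temporal corrector $w_t$ absorbing the non-resonant part of $\partial_t W_\xi$, and set $u_1 := u_0 + w_p + w_c + w_t$. The new error decomposes into a linear piece $\mathcal{R}\bigl(\partial_t w + \div(u_0\otimes w + w\otimes u_0)\bigr)$, a high-frequency oscillation piece coming from $w_p\otimes w_p$ minus its low-frequency average, and the mollification residual $R_\ell - R_0$, all controlled in $L^\infty(L^1)$ by choosing $\lambda$ large and $\ell$ small in a coupled fashion; this yields~(i). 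Bound~(ii) follows from the building-block estimates together with the $L^\infty$ bound on the amplitudes, and the constant $M$ depends only on the fixed geometric decomposition and the normalized profile of the bundle, not on $R_0$.
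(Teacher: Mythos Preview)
Your outline matches the paper's approach closely, but one point would actually fail if taken literally. You place $\partial_t w$ in the linear error and apply $\mathcal{R}$ to it, and you describe the temporal corrector $w_t$ as ``absorbing the non-resonant part of $\partial_t W_\xi$''. In this construction the intermittent jets translate at speed $\mu 2^k$ with $\mu \gg 1$, so $\partial_t W_\xi$ is enormous and cannot be inverted by $\mathcal{R}$ with any gain. The temporal corrector $u_1^{(t)} = -\mathbb{P}\sum_i a_i^2 Q_i$ in fact absorbs the \emph{quadratic} term $\div(W_i^p \otimes W_i^p)$ via the identity $\partial_t Q_i = \div(W_i^p \otimes W_i^p)$; the large $\partial_t(W_i^p + W_i^c)$ is handled by an additional structural identity built into the blocks, $\partial_t(W_i^p + W_i^c) = \div A_i$ for a symmetric tensor $A_i$ with $\|A_i\|_{L^1} < \eps$ (Proposition~\ref{lemma: building blocks}(i),(iv)). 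Then $a_i\,\partial_t(W_i^p+W_i^c) = \div(a_i A_i) - \nabla a_i \cdot A_i$ contributes a small $L^1$ error directly, without ever applying $\mathcal{R}$ to a fast time derivative. This $A_i$ is an essential extra ingredient that your sketch omits.

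Two minor deviations from the paper are harmless but worth noting. First, the paper does not mollify $(u_0,R_0)$: since both are smooth by hypothesis, every error term is bounded by $\eps\, C(t_0,\|R_0\|_{C^3})$ and one simply takes $\eps$ small enough at the end, so there is no mollification residual $R_\ell - R_0$ to track. Second, disjointness of the four direction-bundles is achieved purely by \emph{time} translations $t_{k,i} = i\mu^{-1}2^{-5K}$ (Proposition~\ref{prop: comb} and Lemma~\ref{lemma: comb}), not by spatial shifts; the argument exploits that the jets of different speeds $\mu 2^k$ return simultaneously to the origin only at dyadically related times, so a single well-chosen delay keeps the bundles from colliding there.
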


\begin{proof}[Proof of Theorem \ref{thm: main} given Proposition \ref{prop: iterative}]

	Fix $\lambda>0$. We start the iteration scheme with 
	\begin{equation}
		u_0(t,x) := \chi(t) \sin(x_2 \lambda) e_1
	\end{equation}
    where $\chi\in C_c^\infty([0,1])$, $\chi=0$ in $[0, 1/2]$ and $\chi = 1$ in $[3/2, 1]$.
    Notice that $-\div R_0 = \chi'(t) \sin(x_2 \lambda) e_1 + \nabla p$, hence we can choose a traceless symmetric tensor $R_0$ such that $\| R_0 \|_{L^1} \le C\lambda^{-1}$.
    
    Applying iteratively Proposition \ref{prop: iterative} with $t_0 = 1/2$ we build a sequence $\{(u_n, p_n, R_n)\, : \, n\in \NN\}$ of smooth solutions to \eqref{eqn:EU} such that, for any $n\ge 0$, it holds
    \begin{equation}
    	\| R_n \|_{L^\infty(L^1)} \le C 3^{-n} \lambda^{-1} \, , \quad
    	\| u_{n+1} - u_n \|_{C^0(L^2)} + \| D(u_{n+1} - u_n) \|_{C^0(L^{1,\infty})} \le CM 3^{- n + 1} \lambda^{-1} \, ,
%    	\quad
%    	\| D(u_{n+1} - u_n) \|_{C^0(L^{1,\infty})} \le 3^{-n} \, ,
    \end{equation}
    and $u_n(t,\cdot) = 0$ for any $t\in [0,2^{-n-1}]$.
    It follows that $R_n \to 0$ in $L^\infty(L^1)$ and  $u_n \to u$ in $C^0(L^2)$, where $u$ satisfies the assumptions of Theorem \ref{thm: main}. 
    To prove that $Du\in C^0(L^{1,\infty})$, a bit of extra care is needed since only the weak triangle inequality
    $
    \| f + g \|_{L^{1,\infty}} \le 2 \| f\|_{L^{1,\infty}} + 2 \| g \|_{L^{1,\infty}}
    $
    holds true.
    However, the latter is enough for our purposes
    \begin{align*}
    	\| D u_N \|_{C^0(L^{1,\infty})} 
    	& = 
    	\| Du_0 + D(\sum_{n=0}^{N-1} u_{n+1} - u_n)\|_{C^0(L^{1,\infty})}
    	\\& \le
    	2 \| Du_0 \|_{C^0(L^{1,\infty})} + \sum_{n=0}^{N-1} 2^{n + 1} \| D(u_{n+1} - u_n) \|_{C^0(L^{1,\infty})}
    	\\& \le 2 \| Du_0 \|_{C^0(L^{1,\infty})} + CM\lambda^{-1}\sum_{n=0}^{N-1} 2^{n + 1}3^{-n + 1}
    	< \infty \, .\qedhere
    \end{align*}
\end{proof}

The remaining part of this note is devoted to the proof of Proposition \ref{prop: iterative}. In Section \ref{sec:build} we introduce the building blocks of our construction, in Section \ref{sec:def perturbation} we use them to define the perturbation $u_1-u_0$, finally in Section \ref{sec:new error}, we introduce the new error term $R_1$ and show that it can be made arbitrarily small.

\section{Preliminary lemmas}
\subsection{Lorentz spaces}
For every measurable function $f:\TT^d \to \RR$ we recall the definition
\begin{equation*}
	\|{f}\|_{L^{r,q}} := r^{1/q}
	 \big\|  \lambda \Leb{d}(\{  |f| \ge \lambda \})^{1/r} \|_{L^q((0,{\infty}), \frac{\, d \lambda}{\lambda}  )} \, ,
\end{equation*}
(see e.g. \cite{Grafakos})
and we define the Lorentz space $L^{r,q}$ with $r\in [1,\infty)$, $q\in [1,\infty]$, as the space of those functions $f$ such that $\|f\|_{L^{r,q}}<\infty$. Note that, in spite of the notation, $\|\cdot\|_{L^{r,q}}$ is in general not a norm but for $(r,q)\neq (1,\infty)$ the topological vector space $L^{r,q}$ is locally convex and there exists a norm $|||\cdot|||_{r,q}$ which is equivalent to $\|\cdot\|_{L^{r,q}}$ in the sense that the inequality $C^{-1} |||f|||_{r,q}\leq \|f\|_{L^{r,q}} \leq C |||f|||_{r,q}$ holds.

\subsection{Improved H\"older inequality}
We recall the following improved H\"older inequality, stated as in \cite[Lemma 2.6]{MoSz2019AnnPDE} (see also \cite[Lemma 3.7]{BV}).
If $\lambda \in \NN$ and $f,g:\TT^2 \to \RR$ are smooth functions, then we have 
\begin{equation}
	\label{eqn:impr-holder}
	\| f(x) g(\lambda x) \|_{L^p} \leq \| f \|_{L^p} \| g \|_{L^p} + C(p) \lambda^{-1/p}  \|f \|_{C^1} \|g\|_{L^p} \, .
\end{equation}
When $\int_{\TT^2} g= 0$, then
\begin{equation}
\label{eqn:l26}
\Big| \int_{\TT^2} f(x) g(\lambda x) \, dx \Big| 
\leq\Big| \int_{\TT^2} f(x) \Big(g(\lambda x) - \int g \Big) \, dx \Big| + \Big| \int_{\TT^2} f dx \Big |\cdot \Big| \int_{\TT^2} g dx \Big| 
\leq C \lambda^{-1} \|f \|_{C^1} \|g\|_{L^1} \, . 
%+ \Big| \int f \Big| \cdot \Big | \int g \Big| .
\end{equation}

\subsection{Anti-divergence operators}
Let now us introduce the anti-divergence operator
\begin{equation}
	\mathcal{R}_0 : C^\infty(\TT^2;\RR^2) \to C^\infty(\TT^2;\text{Sym}_2)\, ,
	\quad\quad 
	\mathcal{R}_0 v := (D\Delta^{-1} + (D \Delta^{-1})^{T} - I \cdot \div \Delta^{-1} )\Big(v - \int_{\TT^2}v\Big) \, .
\end{equation}
Here $\text{Sym}_2$ denotes the space of symmetric matrices in $\mathbb{R}^2$.
It is simple to check that $\div(\mathcal{R}_0(v)) = v-\int_{\TT^2} v$, and that $D \mathcal{R}_0$ is a  Calderon-Zygmund operator, in particular it holds
\begin{equation}\label{eq: pre1}
	\| \mathcal{R}_0(v) \|_{L^p} \le C \| \Delta^{-1/2} v \|_{L^p}
	\quad
	\text{for any $p\in (1, \infty)$} \, ,
\end{equation}
\begin{equation}\label{eq: pre2}
	\| \mathcal{R}_0(v) \|_{L^p} \le C(p) \| v \|_{L^p}
	\quad
	\text{for any $p\in [1, \infty]$} \, .
\end{equation}
Notice that \eqref{eq: pre1} and \eqref{eq: pre2} allow showing that
	\begin{equation}\label{eq: pre3}
		\| \mathcal{R}_0(v_\lambda) \|_{L^p} \le C(p) \lambda^{-1} \| v \|_{L^p}
		\quad
		\text{for any $p\in [1, \infty]$} \, ,
	\end{equation}
where $v_\lambda(x) := v(\lambda x)$ for some $\lambda\in \NN$. The latter is immediate for $p\in (1,\infty)$, since
\begin{equation}
	 \| \mathcal{R}_0(v_\lambda) \|_{L^p} \le C \| \Delta^{-1/2} v_\lambda \|_{L^p} \le C \lambda^{-1} \| v \|_{L^p} \, ,
\end{equation}
in the case $p=1$ and $p=\infty$ we need to take advantage of the Sobolev embedding theorem:
\begin{equation}
	\| \mathcal{R}_0(v_\lambda) \|_{L^1}
	\le \| \mathcal{R}_0(v_\lambda) \|_{L^{3/2}}
	 \le C \| \Delta^{-1/2} v_\lambda \|_{L^{3/2}} 
	 \le C \lambda^{-1} \| \Delta^{-1/2} v \|_{L^{3/2}}
	 \le C \lambda^{-1} \| v \|_{L^1} \, .
\end{equation}

\begin{lemma}
\label{lemma23}
	Let $\lambda \in \NN$ and $f \in C^\infty( \TT^2; \RR)$, $v\in C^\infty( \TT^2; \RR^2)$ with $\int v = 0$, and $v_\lambda= v(\lambda x)$. If we set 
	$$
	\fR (f  v_\lambda) = f  \mathcal{R}_0 v_\lambda -\mathcal{R}_0 (\nabla f \cdot \mathcal{R}_0 v_\lambda+\int f v_\lambda) \in C^\infty(\TT^2; \text{Sym}_2) \, ,
	$$ 
	then we have that $\div  \fR (f v_\lambda) = f v_\lambda-\int_{\TT^2} f v_\lambda$ and
	\begin{equation}
	\label{ts:antidiv}
	\| \fR (f  v_\lambda)\|_{L^p} \leq C(p) \lambda^{-1} \|f\|_{C^{1}} \| v\|_{L^p} \qquad \mbox{for every }\,  p\in [1,\infty] \, .
	\end{equation}
%    \begin{equation}
%    	\label{ts:antidiv}
%    	\|D^k \fR (f  g_\lambda)\|_{L^p} \leq C(k,p) \lambda^{k-1} \|f\|_{C^{k+1}} \| g\|_{W^{k,p}} \qquad \mbox{for every } k\in \NN, p\in [1,\infty].
%    \end{equation}
\end{lemma}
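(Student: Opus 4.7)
My plan is to verify the divergence identity first and then bound each summand of $\fR(f v_\lambda)$ in $L^p$ separately.  For the divergence, the product rule combined with $\div \mathcal{R}_0 w = w - \int_{\TT^2} w$ yields
$$\div(f\, \mathcal{R}_0 v_\lambda) = f v_\lambda + (\mathcal{R}_0 v_\lambda)\nabla f,$$
where I use that $\int_{\TT^2} v_\lambda = 0$ (which follows from $\int v = 0$ and $\lambda \in \NN$ by a change of variables).  The algebraic key is that integration by parts gives $\int_{\TT^2} \nabla f \cdot \mathcal{R}_0 v_\lambda = -\int_{\TT^2} f\, \div \mathcal{R}_0 v_\lambda = -\int_{\TT^2} f v_\lambda$, so the argument $\nabla f \cdot \mathcal{R}_0 v_\lambda + \int_{\TT^2} f v_\lambda$ of the second $\mathcal{R}_0$ has mean zero, and $\mathcal{R}_0$ inverts its divergence exactly.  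Subtracting produces $\div \fR(f v_\lambda) = f v_\lambda - \int_{\TT^2} f v_\lambda$, as required.

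For the $L^p$ bound, H\"older combined with \eqref{eq: pre3} yields
$$\|f\, \mathcal{R}_0 v_\lambda\|_{L^p} \leq \|f\|_\infty \|\mathcal{R}_0 v_\lambda\|_{L^p} \leq C(p)\lambda^{-1}\|f\|_{C^1}\|v\|_{L^p}$$
for the first summand.  For the second, \eqref{eq: pre2} lets me pull $\mathcal{R}_0$ outside the $L^p$ norm; splitting then gives
$$\Bigl\|\mathcal{R}_0\bigl(\nabla f \cdot \mathcal{R}_0 v_\lambda + {\textstyle \int_{\TT^2}} f v_\lambda\bigr)\Bigr\|_{L^p} \leq C(p)\bigl(\|\nabla f\|_\infty \|\mathcal{R}_0 v_\lambda\|_{L^p} + \bigl|{\textstyle \int_{\TT^2}} f v_\lambda\bigr|\bigr),$$
where the first summand on the right is again controlled by \eqref{eq: pre3}.

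The only non-routine step, and the main potential pitfall, is the constant term.  A naive bound $|\int_{\TT^2} f v_\lambda| \leq \|f\|_\infty \|v_\lambda\|_{L^1}$ is $O(1)$, not $O(\lambda^{-1})$.  Here I would invoke the improved H\"older inequality \eqref{eqn:l26}: since $\int v = 0$ it gives $|\int_{\TT^2} f v_\lambda| \leq C \lambda^{-1}\|f\|_{C^1}\|v\|_{L^1} \leq C \lambda^{-1}\|f\|_{C^1}\|v\|_{L^p}$, using the embedding $L^p \hookrightarrow L^1$ on the unit-volume torus.  Summing the three contributions yields the estimate on the full range $p \in [1,\infty]$, which is admissible because \eqref{eq: pre2} and \eqref{eq: pre3} hold throughout this range.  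The whole point of writing the antidivergence in the specific form $\fR(f v_\lambda) = f \mathcal{R}_0 v_\lambda - \mathcal{R}_0(\nabla f \cdot \mathcal{R}_0 v_\lambda + \int_{\TT^2} f v_\lambda)$ is precisely to arrange that the argument of the second $\mathcal{R}_0$ is mean-zero, so that this bound closes cleanly.
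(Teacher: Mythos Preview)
Your proof is correct and follows essentially the same route as the paper: bound $f\,\mathcal{R}_0 v_\lambda$ via \eqref{eq: pre3}, pull the outer $\mathcal{R}_0$ through by \eqref{eq: pre2}, handle $\nabla f \cdot \mathcal{R}_0 v_\lambda$ again by \eqref{eq: pre3}, and dispatch the constant term $\int f v_\lambda$ with \eqref{eqn:l26}. You give more detail than the paper on the divergence identity (in particular the mean-zero observation for the argument of the second $\mathcal{R}_0$), which the paper simply declares ``immediate'', but the substance is identical.
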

\begin{proof}
	The verification of $\div  \fR (f v_\lambda) = f v_\lambda-\int_{\TT^2} fv_\lambda$ is immediate. To prove \eqref{ts:antidiv} we use \eqref{eq: pre3} and \eqref{eqn:l26}:
	\begin{equation}
		\| f \mathcal{R}_0 v_\lambda\|_{L^p}
		\le \| f \|_{C^0} \| \mathcal{R}_0 v_\lambda\|_{L^p}
		\le C \lambda^{-1} \| f \|_{C^0} \| v\|_{L^p} 
		\, ,
	\end{equation}
	\begin{equation*}
		\| \mathcal{R}_0 (\nabla f \cdot \mathcal{R}_0 v_\lambda+\int_{\TT^2} f v_\lambda) \|_{L^p}
		\le 
		C \| \nabla f \cdot \mathcal{R}_0 v_\lambda+\int_{\TT^2} f v_\lambda \|_{L^p}
		\le C\lambda^{-1} \| f \|_{C^1} \| v \|_{L^p} + C\lambda^{-1} \| f\|_{C^1} \| v \|_{L^1} \, .\qedhere
	\end{equation*}	
%	It is enough to combine \cite[lemma 3.5]{MoSa2019} and the remark in \cite[page 12]{MoSa2019}.	
\end{proof}

\begin{remark}\label{rmk:scalar}
The operator  $\mathcal{R}$ can be also defined on scalar functions $f : \TT^2 \to \RR$, $v: \TT^2\to \RR$ as
\begin{equation}\label{eq:R scalar}
	\fR (f  v_\lambda) = f  \nabla \Delta^{-1} v_\lambda -\nabla \Delta^{-1}\Big(\nabla f \cdot \mathcal{R}_0 v_\lambda+\int f v_\lambda \Big) \in C^\infty(\TT^2; \RR^2) \, , 
\end{equation}
and arguing as in Lemma \eqref{lemma23} we can easily show that $\div  \fR (f v_\lambda) = f v_\lambda-\int_{\TT^2} f v_\lambda$ and
\begin{equation*}
	\| \fR (f  v_\lambda)\|_{L^p} \leq C(p) \lambda^{-1} \|f\|_{C^{1}} \| v\|_{L^p} \qquad \mbox{for every }\,  p\in [1,\infty] \, .
\end{equation*}
\end{remark}

\begin{lemma} For any $a\in C^\infty(\TT^2)$ and $A\in C^\infty(\TT; \RR^{2\times 2})$ with $\int_{\TT^2} A =0$, it holds
	\begin{equation}\label{eqn:R-di-div}
		\|\mathcal{R}_0\mathcal{R}(\nabla a \cdot \div A)\|_{L^1} \le C(\|a \|_{C^3}) \| A \|_{L^1} \, .
	\end{equation}	
\end{lemma}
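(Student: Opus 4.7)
The plan is to reduce the estimate to mapping properties of Riesz-type smoothing operators after a single integration by parts. The Leibniz rule yields the scalar identity
\begin{equation*}
\nabla a \cdot \div A \;=\; \div B \,-\, D^{2} a : A,
\end{equation*}
where $B$ is the vector field with components $B_{j} = A_{ij}\,\partial_{i} a$ and $D^{2}a : A = \partial_{i}\partial_{j}a \, A_{ij}$; in particular $\|B\|_{L^{1}} \le \|a\|_{C^{1}}\|A\|_{L^{1}}$ and $\|D^{2}a : A\|_{L^{1}} \le \|a\|_{C^{2}}\|A\|_{L^{1}}$. Taking $\mathcal{R}$ with the trivial decomposition $f\equiv 1$ in \eqref{eq:R scalar}, so that $\mathcal{R}(\varphi)=\nabla\Delta^{-1}(\varphi-\bar\varphi)$, and using linearity together with the fact that $\int_{\TT^{2}} \div B = 0$, I rewrite
\begin{equation*}
\mathcal{R}_{0}\mathcal{R}(\nabla a \cdot \div A) \;=\; \mathcal{R}_{0}\nabla\Delta^{-1}\div B \;-\; \mathcal{R}_{0}\nabla\Delta^{-1}\bigl(D^{2}a : A - \overline{D^{2}a : A}\bigr).
\end{equation*}

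For the second summand, set $h := D^{2}a : A - \overline{D^{2}a : A}$. The Riesz potential $\nabla\Delta^{-1}$ maps $L^{1}(\TT^{2})$ into $L^{2,\infty}(\TT^{2})$ by the endpoint Hardy--Littlewood--Sobolev inequality, and on the bounded domain $\TT^{2}$ one has $L^{2,\infty}(\TT^{2})\hookrightarrow L^{1}(\TT^{2})$. Combined with the $L^{1}$ bound $\|\mathcal{R}_{0} v\|_{L^{1}} \le C\|v\|_{L^{1}}$ from \eqref{eq: pre2}, this yields $\|\mathcal{R}_{0}\nabla\Delta^{-1} h\|_{L^{1}} \le C\|a\|_{C^{2}}\|A\|_{L^{1}}$. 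For the first summand I use \eqref{eq: pre1} with $p=3/2$:
\begin{equation*}
\|\mathcal{R}_{0}\nabla\Delta^{-1}\div B\|_{L^{3/2}} \le C\|\Delta^{-1/2}\nabla\Delta^{-1}\div B\|_{L^{3/2}}.
\end{equation*}
The composition $\Delta^{-1/2}\nabla\Delta^{-1}\div$ is a Fourier multiplier of order $-1$, hence acts like a Riesz potential of order one on $B$ and maps $L^{1}(\TT^{2})$ into $L^{2,\infty}(\TT^{2})\hookrightarrow L^{3/2}(\TT^{2})$. This controls the right-hand side by $C\|B\|_{L^{1}} \le C\|a\|_{C^{1}}\|A\|_{L^{1}}$, and a final embedding $L^{3/2}(\TT^{2})\hookrightarrow L^{1}(\TT^{2})$ closes the estimate.

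The main obstacle is the failure of Calder\'on--Zygmund operators and of the Riesz transforms built into $\mathcal{R}_{0}$ and $\nabla\Delta^{-1}\div$ to be bounded at the $L^{1}$ endpoint, which rules out a direct $L^{1}$ chain. The workaround is to exploit the one-degree smoothing provided by $\mathcal{R}$, or by the extra $\Delta^{-1/2}$ supplied by \eqref{eq: pre1}, to enter $L^{2,\infty}(\TT^{2})$ via the endpoint Hardy--Littlewood--Sobolev inequality, and then re-embed into $L^{1}$ through $L^{2,\infty}\hookrightarrow L^{3/2}\hookrightarrow L^{1}$, available on the compact torus. Note that the argument only uses $a\in C^{2}$, so it is slightly sharper than the $C^{3}$ dependence stated in the lemma.
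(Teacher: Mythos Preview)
Your proof is correct, but it takes a genuinely different route from the paper's. The paper argues by duality: setting $T(A):=\mathcal{R}(\nabla a\cdot\div A)$, it bounds $\|T^{*}\mathcal{R}_{0}^{*}(B)\|_{L^{\infty}}$ via the Sobolev embedding $W^{1,3}(\TT^{2})\hookrightarrow L^{\infty}(\TT^{2})$ together with the fact that $D\,T^{*}\mathcal{R}_{0}^{*}$ is a composition of Calder\'on--Zygmund operators with slow coefficients, hence bounded on $L^{3}$. Your argument is direct: you split $\nabla a\cdot\div A=\div B-D^{2}a:A$ by the Leibniz rule and then trade the one degree of smoothing (coming from $\nabla\Delta^{-1}$, or from the extra $\Delta^{-1/2}$ in \eqref{eq: pre1}) for the endpoint Hardy--Littlewood--Sobolev gain $L^{1}\to L^{2,\infty}$, finally re-embedding $L^{2,\infty}\hookrightarrow L^{3/2}\hookrightarrow L^{1}$ on the torus. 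One small point worth making explicit: the statement ``$\Delta^{-1/2}\nabla\Delta^{-1}\div$ is an order $-1$ multiplier, hence maps $L^{1}\to L^{2,\infty}$'' is correct but deserves a word of justification, since not every order $-1$ operator has this property; here it follows because the symbol factors as $(|\xi|^{-1})\cdot(\text{homogeneous degree }0)$, so the operator is a Riesz potential composed with a CZ operator, and CZ operators are bounded on $L^{2,\infty}$. Your approach has the merit of being direct and of tracking the regularity of $a$ more sharply (only $C^{2}$ is needed), while the duality argument is shorter and avoids the endpoint HLS machinery.
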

\begin{proof}
	Set $T(A) := \mathcal{R}(\nabla a \cdot \div A)$.
%	Observe that the adjoint operator of $A \to \nabla \Delta^{-1}\mathcal{R}(\nabla a, \div A)$ is given gy
%	\begin{equation}
%		
%	\end{equation}
	By duality, it suffices to show that
	\begin{equation}
			\| T^* \mathcal{R}_0^*(B)\|_{L^\infty} \le C(\|a \|_{C^3}) \| B \|_{L^\infty} \, ,
			\end{equation}
%	\begin{equation}
%		\|D (\nabla a\cdot \mathcal{R}^*\mathcal{R}^* B)\|_{L^\infty} \le C(\|a \|_{C^3}) \| B \|_{L^\infty} \, ,
%	\end{equation}
	where $T^*$ and $\mathcal{R}_0^+$ denote the adjoint of $T$ and $\mathcal{R}_0$, respectively. To this aim we employ the Sobolev embedding and the fact that $D T^* \mathcal{R}_0^*(B)$ maps $L^p$ into $L^p$ for any $p\in (1,\infty)$:
	\begin{equation*}
		\| T^* \mathcal{R}_0^*(B) \|_{L^\infty} 
		\le
		C \| D T^* \mathcal{R}_0^*(B)\|_{L^3}
		\le 
		C(\|a \|_{C^3}) \| B \|_{L^3}
		\le 	C(\|a \|_{C^3}) \| B \|_{L^\infty} \, .\qedhere
	\end{equation*}
\end{proof}

\section{Building blocks}\label{sec:build}
%The following proposition provides the building blocks of our construction.

In this section we introduce the building blocks of our construction. They will be employed in Section \ref{sec:def perturbation} to define the principal term of $u_1 - u_0$ in Proposition \ref{prop: iterative}.

\begin{propos}[Building blocks] \label{lemma: building blocks}
	Set $\xi_1 := e_1$, $\xi_2:= e_2$, $\xi_3 := e_1 + e_2$ and $\xi_4 : = e_1 - e_2$.
	Then, for any $\eps>0$ there exist $W^p_{i}, W^c_{i}, Q_i \in C^\infty((-1,1)\times \TT^2; \RR^2)$, $A_i \in C^\infty((-1,1)\times \TT^2; \text{Sym}_2)$ for $i=1,\ldots, 4$, such that
	\begin{itemize}
		
		\item[(i)] $\div (W_i^p + W_i^c) =0$, $\partial_t Q_i = \div(W_i^p\otimes W_i^p)$, and $\partial_t (W^p_i + W^c_i) = \div(A_i)$ ;
		
		\item[(ii)] $\int_{\TT^2} A_i =0$, $\int_{\TT^2} W_i^p = \int_{\TT^2} W^c_i = 0$, and $W_i^p$, $W_i^c$, $A_i$ are $\lambda^{-1}$-periodic functions for some $\lambda\in \mathbb{Z}$ with $\lambda \ge \eps^{-1}$;
		%		\begin{equation}
		%			\int W_i^p \otimes W_i^c = \xi_i \otimes \xi_i \, ;
		%		\end{equation}
		\item[(iii)] $\int_{\TT^2} W_i^p \otimes W_i^p = \frac{\xi_i}{|\xi_i|} \otimes \frac{\xi_i}{|\xi_i|}$;
		
		\item[(iv)] the following estimates hold
		\begin{equation}
			\eps \| W_i^p \|_{L^2} + \| W_i^p \|_{L^1} + \| W_i^c \|_{L^2}  \le \eps \, ,
		\end{equation}
		\begin{equation}
			\| D (W_i^p + W_i^c) \|_{L^{1,\infty}} + \| Q_i \|_{L^2} + \| D Q_i \|_{L^{1,\infty}} + \| A_i \|_{L^1} < \eps \, ;
		\end{equation}

		\item[(v)] for $i\neq i'$ the union of the supports of $W^p_i$, $W_i^c$, $Q_i$, is disjoint in space-time from the union of the supports of $W^p_{i'}$, $W_{i'}^{c}$, $Q_{i'}$.
	\end{itemize}	
\end{propos}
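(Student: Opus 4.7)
First I would follow the two-dimensional intermittent jet construction of \cite{BCV18}, which for each direction $\xi_i$ produces a divergence-free (up to a small corrector) traveling wave $\widetilde{W}_i^p(t,x)$ of the form $\xi_i\,\psi_\mu(x_\parallel - t|\xi_i|)\,\phi_\lambda(x_\perp)$, concentrated on a thin moving tube aligned with $\xi_i^\perp$, $\lambda^{-1}$-periodic in $x$, and satisfying $\int_{\TT^2} \widetilde{W}_i^p \otimes \widetilde{W}_i^p = \frac{\xi_i}{|\xi_i|} \otimes \frac{\xi_i}{|\xi_i|}$. All properties of Proposition~\ref{lemma: building blocks} except the $L^{1,\infty}$-derivative bound in (iv) then follow from the standard BCV18 estimates.

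The crucial new step, as the paper indicates, is to upgrade to the $L^{1,\infty}$-derivative bound by bundling many jets in the spirit of the atomic decomposition of Lorentz functions. I would fix $N \gg 1$ and geometric scales $r_k = 2^{-k}r_0$, $k=1,\dots,N$, and place on the torus $N$ pairwise disjoint copies $\widetilde{W}_{i,k}^p$ of the basic jet with cross-section $r_k$ and amplitude $r_k^{-1/2}$, so that $\|\widetilde{W}_{i,k}^p\|_{L^2}^2 \sim 1$ for each $k$ and $\|\widetilde{W}_{i,k}^p\|_{L^1} \sim r_k^{1/2}$. Summing with weights $a_k = N^{-1/2}$ gives $W_i^p = \sum_k a_k \widetilde{W}_{i,k}^p$ with correct $L^2$ normalization and, since the tubes are disjoint, $\int W_i^p \otimes W_i^p = \sum_k a_k^2 \int \widetilde{W}_{i,k}^p \otimes \widetilde{W}_{i,k}^p = \frac{\xi_i}{|\xi_i|} \otimes \frac{\xi_i}{|\xi_i|}$. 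Each tube contributes a level set $\{|DW_i^p| > t_k\}$ of measure $\sim r_k$ at height $t_k \sim a_k\lambda r_k^{-1/2}$; with $r_k=2^{-k}r_0$ the products $t_k r_k$ are equal across $k$ and bounded by $N^{-1/2}\lambda r_0^{1/2}$, so $\|DW_i^p\|_{L^{1,\infty}} \lesssim N^{-1/2}\lambda r_0^{1/2}$, which can be made smaller than $\eps$ by choosing $N$ large and tuning $r_0,\lambda$ against \eqref{in:alvino}.

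With $W_i^p$ in hand, the remaining objects come from standard manipulations. I would take $W_i^c$ as the divergence-free corrector making $W_i^p + W_i^c = \nabla^\perp \Psi_i$ for a scalar potential; $A_i$ would be obtained by applying the anti-divergence $\mathcal{R}_0$ of Lemma~\ref{lemma23} to $\partial_t(W_i^p + W_i^c)$, where the traveling-wave structure converts $\partial_t$ into a spatial derivative, so that \eqref{ts:antidiv} yields the factor $\lambda^{-1}$ making $\|A_i\|_{L^1} < \eps$; and $Q_i$ would come analogously from integrating $\div(W_i^p \otimes W_i^p)$ in $t$ via the scalar anti-divergence of Remark~\ref{rmk:scalar}. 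To enforce (v), I would partition $(-1,1)$ into four disjoint sub-intervals $I_1,\dots,I_4$, pick cutoffs $\chi_i \in C_c^\infty(I_i)$, and replace $\widetilde{W}_i^{p/c}, \widetilde{Q}_i, \widetilde{A}_i$ by the time-localised versions $\chi_i \widetilde{W}_i^{p/c}$, $\chi_i^2 \widetilde{Q}_i$, $\chi_i \widetilde{A}_i - \chi_i' \mathcal{R}_0(\widetilde{W}_i^p+\widetilde{W}_i^c)$, which preserves (i) and trivially makes the supports in $(-1,1)\times\TT^2$ disjoint across $i$.

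The main obstacle is the simultaneous calibration of the bundling parameters $N$, $(r_k)$, $\lambda$, $\mu$, and the placement of the $N$ tubes, so that (a) the tensor average in (iii) is exactly $\frac{\xi_i}{|\xi_i|}\otimes\frac{\xi_i}{|\xi_i|}$, which requires either truly disjoint tubes or the improved H\"older estimate \eqref{eqn:impr-holder} to kill the cross terms; (b) each bundle fits on $\TT^2$, i.e.\ the $N$ tubes of cross-section $r_k$ can coexist without overlap along the whole time window; and (c) after the further time-localisation by $\chi_i$, every quantity in (iv) stays below $\eps$, in particular the $\chi_i'$ contribution to $A_i$ picks up $\lambda^{-1}$ from \eqref{ts:antidiv} but no help from the traveling-wave cancellation, and must be controlled by the initial choice of $\lambda$. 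Balancing these three requirements --- intermittency for small $L^1$, bundling for weak-$L^1$ derivative control, and robustness under time-localisation --- is the technical core of the construction.
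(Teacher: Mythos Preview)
Your bundling idea for the $L^{1,\infty}$ derivative bound is essentially the paper's: a weighted sum $K^{-1/2}\sum_k W^k_{(\xi_i)}$ of jets at dyadic scales $2^{-k}r_\perp$, with the disjoint-support level-set computation you sketch. The constructions of $W_i^c$, $Q_i$, $A_i$ in the paper are given by explicit formulas rather than by applying $\mathcal{R}_0$ (for $A_i$ one writes $\partial_t W^k_{(\xi)}$ as an explicit divergence of a symmetric tensor using the traveling-wave identity $\partial_t\psi^k_{(\xi)}=\mu 2^k\,\xi\cdot\nabla\psi^k_{(\xi)}$; for $Q_i$ one checks directly that $\div(W^k\otimes W^k)=\partial_t\big(\tfrac{1}{2^k\mu}\xi(\psi\phi)^2\big)$), but your outline is in the same spirit.

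The genuine gap is your mechanism for (v). Property (iii) is a \emph{pointwise-in-time} identity: for every $t\in(-1,1)$ one needs $\int_{\TT^2} W_i^p(t,\cdot)\otimes W_i^p(t,\cdot)=\tfrac{\xi_i}{|\xi_i|}\otimes\tfrac{\xi_i}{|\xi_i|}$, because in Section~\ref{sec:def perturbation} the error $R_0(t,\cdot)$ is cancelled at each fixed time by the low-frequency part $\sum_i a_i(t,\cdot)^2\,\tfrac{\xi_i}{|\xi_i|}\otimes\tfrac{\xi_i}{|\xi_i|}$. If you multiply $W_i^p$ by a cutoff $\chi_i$ supported in a subinterval $I_i$, then for $t\notin I_i$ the integral in (iii) vanishes, and at any given time at most one direction is active, so the quadratic interaction cannot reproduce a general symmetric $R_0$. (The same cutoff also breaks the $Q_i$ relation: $\partial_t(\chi_i^2\widetilde Q_i)$ picks up an extra $2\chi_i\chi_i'\widetilde Q_i$ that is not a divergence.) This is precisely why the paper cannot localise in time and instead proves a combinatorial lemma (Proposition~\ref{prop: comb} and Lemma~\ref{lemma: comb}): with $n_0=5K$ and carefully chosen time shifts $t_{k,i}=i\mu^{-1}2^{-5K}$, the $4K$ moving blobs never collide in space-time, while each $W_i^p(t,\cdot)$ remains nontrivial for all $t$. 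That disjointness argument, exploiting that the speeds are dyadically related and the tubes intersect only near the origin, is the technical heart you are missing.
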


The velocity field $W^p_i$ is the principal term, it has zero mean, high frequency $\lambda\ge \eps^{-1}$, is controlled in the relevant norms (cf. (iv)), and satisfies the fundamental property (iii): the quadratic interaction $W_i^p\otimes W_i^p$ produces the lower order term $\frac{\xi_i}{|\xi_i|} \otimes \frac{\xi_i}{|\xi_i|}$. The latter, combined with slow coefficients $a_i\in C^\infty(\TT^2)$, is used to cancel the error $R_0$ out.
To achieve the crucial bound $\| D W_i^p \|_{L^{1,\infty}}$ we design the principal term as
\begin{equation}\label{zzz}
	W_i^p(x,t) =W_{\xi_i,K,n_0}^p(t,x) := \frac{1}{K^{1/2}} \sum_{k=n_0 + 1}^{K + n_0} W^k_{(\xi_i)}(t, x) \, ,
\end{equation}
%\begin{equation}\label{zzz}
%	W_i^p(x,t) := \frac{1}{K^{1/2}} \sum_{k = n_0+1}^{K + n_0} W_k(x,t) \, ,
%\end{equation}
where $K, n_0 \gg 1$ are big parameters and $\xi_i $ is one of the four directions appearing in the statement of Proposition~\ref{lemma: building blocks}. {In a first stage, we build $W_i^p(x,t)$ for a fixed parameter $i$, ignoring the issue that, for different parameters, such functions will not have disjoint support as requested in Proposition~\ref{lemma: building blocks} (v); only in Section~\ref{subsec:combinatorial} we make sure to suitably time-translate them, making substantial use of their special structure, to guarantee that Proposition~\ref{lemma: building blocks} (v) holds .}
 The vector fields $W_k(x,t)$, $k=n_0+1, \ldots, n_0 + K$, are the $2$-dimensional counterpart of the intermittent jets introduced in \cite{BCV18}. They have $L^2$ norm equal to $1$, and are supported on disjoint balls of radius $2^{-k} r$, for some $r\ll 1$, which move in direction $e_i$ with speed $\mu 2^k$, where $\mu \gg 1$. The fast time translation is used to make $W_k$ 
``almost divergence free'' and ``almost solutions to the Euler equation''. 
In more rigorous terms, it means that there exist vector fields $W_k^p$, $Q_k$, that are smaller than $W_k$ satisfying
$\div(W_k + W_k^p)=0$ and $\partial_t Q_k = \div(W_k \otimes W_k)$. 
The vector fields $W_i^p$ and $Q_i$ are defined bundling together $W_k^p$ and $Q_k$ as we did in \eqref{zzz}.

Another important property we need is
that $W_i \otimes W_j =0$ when $i\neq j$. It is ensured by (iv) in Proposition \ref{lemma: building blocks}, which builds upon a delicate combinatorial lemma presented in section \ref{subsec:combinatorial}.

We finally explain the role of the matrix $A_i$ in our construction. Let us begin by noticing that the principal term $W_i^p$ has big time derivative, being fast translating in time. Hence, the term $\partial_t W_i^p$ cannot be treated as an error. To overcome this difficulty we impose an extra structure on $W^p_i$ and $W^c_i$. We construct them in order to have the identity $\partial_t(W_i^p + W_i^c) = \div(A_i)$, for some symmetric matrix $A_i$ which has small $L^1$-norm. The latter can be added to the new error term $R_1$.

\subsection{General notation}
Given a velocity field $u:=(u_1, u_2): \RR^2 \to \RR^2$ we write
	\begin{equation}
		u^{\perp} := (-u_2, u_1)\, ,
		\quad
		\text{curl}(u) := \partial_{1} u_2 - \partial_{2} u_1 \,
		\quad
		\text{div}(u) := \partial_1 u_1 + \partial_2 u_2\, .
	\end{equation}

Let us fix $r_\perp \ll r_\parallel \ll 1$ and $k\in \NN$. We adopt the following convention: given any $\rho:  \mathbb{R} \to \mathbb{R}$ supported in $(-1,1)$ we write
\begin{equation*}
	\rho_{r_\perp}^k(x):= \left(\frac{1}{2^{-k} r_\perp}\right)^{1/2} \rho\left( \frac{x - 2^{2-k}r_\perp}{2^{-k}r_\perp}  \right)\, ,
\end{equation*}

\begin{equation}
	\rho_{r_\parallel}^k(x):= \left(\frac{1}{2^{-k}r_{\parallel}}\right)^{1/2} \rho\left(\frac{ x }{2^{-k}r_{\parallel}}\right) \, .
\end{equation}
Notice that $\supp(\rho_{r_\perp}^k) \subset ( 3 \cdot 2^{-k} r_\perp, 5\cdot 2^{-k}r_\perp)$, in particular
\begin{equation}
	\supp (\rho^k_{r_\perp}) \cap \supp (\rho^{k'}_{r_\perp}) = \emptyset
	\quad
	\text{for $k\neq k'$\, ,}
\end{equation} 
and 
\begin{equation}
	\label{eqn:thin-tube1}
	\bigcup_{k \ge 1} \supp (\rho^k_{r_\perp})
	\subset (0, 5 r_\perp2^{-n_0}) \, .
\end{equation}
With a slight abuse of notation we keep denoting by $\rho_{r_\perp}^k, \rho_{r_\parallel}^k: \TT \to \RR$ their periodized version.

\subsection{Construction of the principal block}
We consider $\Phi, \psi: \RR\to \RR$ supported in $(-1,1)$, we set $\phi := -\Phi'''$ and assume $\int \psi^2 = \int \phi^2 =1$.
Given $r_\perp \ll r_\parallel \ll 1$ and $k\in \NN$ we have
\begin{equation}
	\supp (\phi^k_{r_\perp}) \cap \supp (\phi^{k'}_{r_\perp}) 
	= \supp ((\Phi')^k_{r_\perp}) \cap \supp ((\Phi')^{k'}_{r_\perp}) 
	=\supp ((\Phi'')^k_{r_\perp}) \cap \supp ((\Phi'')^{k'}_{r_\perp})
	= \emptyset
	\quad
	\text{for $k\neq k'$\, ,}
\end{equation} 
and 
\begin{equation}
\label{eqn:thin-tube}
	\bigcup_k \supp (\phi^k_{r_\perp}),\,  \bigcup_k \supp (\Phi^k_{r_\perp})
	\subset (0, 5 r_\perp2^{-n_0}) \, .
\end{equation}
We periodize $(\Phi')_{r_\perp}^k$, $(\Phi'')_{r_\perp}^k$, $\phi_{r_\perp}^k$, $\psi_{r_\parallel}^k$ keeping the same notation.

Given a vector $\xi \in \mathbb{Q}^2$, and parameters $\lambda, \mu \gg 1$ we set
\begin{equation}
	(\Phi')_{(\xi)}^k(x) := (\Phi')_{r_\perp}^k (   \lambda x\cdot \xi^{\perp} ) \, ,
	\quad
	(\Phi'')_{(\xi)}^k(x) := (\Phi'')_{r_\perp}^k (   \lambda x\cdot \xi^{\perp} ) \, ,
	\quad
	\phi_{(\xi)}^k(x) := \phi_{r_\perp}^k (  \lambda x\cdot \xi^{\perp} ) \, ,
\end{equation}

\begin{equation}
	\psi_{(\xi)}^k( x, t) := \psi_{r_\parallel}^k (   \lambda( x\cdot \xi + \mu 2^k t ) ) \, ,
\end{equation}

\begin{equation}
	W_{(\xi)}^k(x,t) := \frac{\xi}{|\xi|} \,  \psi_{(\xi)}^k(x, t) \phi_{(\xi)}^k(x) \, .
\end{equation}
We finally fix  $K, n_0 \in \NN$, and define the principal block
\begin{equation}\label{eqn:Wtutto}
	W_{\xi,K,n_0}^p(t,x) := \frac{1}{K^{1/2}} \sum_{k=n_0 + 1}^{K + n_0} W^k_{(\xi)}(t + t_k, x) \, ,
\end{equation}
where $t_k$ are time translations that will be chosen later.
The following fundamental identity holds
\begin{equation}\label{eq: 2}
	\int W_{\xi,K,n_0}^p \otimes W_{\xi,K,n_0}^p
	=
	\frac{1}{K}\sum_{k=n_0+1}^{K+n_0} \int W_{(\xi)}^k \otimes W_{(\xi)}^k  
	= 
	\frac{\xi}{|\xi|} \otimes \frac{\xi}{|\xi|} \int (\psi_{(\xi)}^k \phi_{(\xi)}^k)^2 
	=
	\frac{\xi}{|\xi|} \otimes \frac{\xi}{|\xi|} \, .
\end{equation}

%{\color{blue}Observe that
%\begin{equation}
%	W_{\xi,K,n_0}(t,x) = W_{\xi,K,n_0}(0,x + \xi t 2^k \mu) \, .
%\end{equation}
%}

\subsection{Correction of the divergence}
Observe that
\begin{equation}
	\div W_{(\xi)}^k (x,t) 
	= \frac{\lambda}{2^{-k}r_\parallel}  (\dot \psi)^k_{(\xi)} (x, t)  \phi^k_{(\xi)}(x)\, .
\end{equation}
Setting 
\begin{equation}
	(W^k_{(\xi)})^c(x,t) :=  \frac {r_\perp}{r_\parallel} \, \frac{\xi^{\perp}}{|\xi|} (\dot\psi)^k_{(\xi)} (x, t)  (\Phi'')^k_{(\xi)}(x)\, ,
\end{equation}
and using the identity $ 2^{-k} r_\perp \partial_{x_1} (\Phi'')_{r_{\perp}}^k = -\phi_{r_\perp}^k$ we get $\div (W_{(\xi)} + W^c_{(\xi)})=0$. 

To correct the divergence of $	W_{\xi, K ,n_0}$ we introduce
\begin{equation}
	W_{\xi, K ,n_0}^c(t,x) := \frac{1}{K^{1/2}} \sum_{k=n_0 + 1}^{K + n_0} (W^k_{(\xi)})^c(t + t_k, x) \, ,
\end{equation}
and set
\begin{equation}
	W_{\xi, K, n_0}(t,x) := W_{\xi, K ,n_0}^p(t,x) + W_{\xi, K ,n_0}^c(t,x) \, .
\end{equation}

\subsection{Time correction}
Let us now set
\begin{equation}
	Q_{(\xi)}^k(t,x):= \frac{1}{2^k\mu} \xi (\psi_{(\xi)}^k(x,t + t_k)\phi_{(\xi)}^k(x))^2 \, ,
\end{equation}
and observe that
\begin{equation}
\div(	W_{(\xi)}^k \otimes W_{(\xi)}^k) 
=
 2(W_{(\xi)}^k \cdot \nabla \psi_{(\xi)}^k) \phi_{(\xi)}^k \frac{\xi}{|\xi|}
= 
\frac{1}{2^k\mu}2 (W_{(\xi)}^k \cdot \partial_t \psi_{(\xi)}^k) \phi_{(\xi)}^k \frac{\xi}{|\xi|}
= 
\frac{1}{2^k\mu} \partial_t (\psi_{(\xi)}^k\phi_{(\xi)}^k)^2 \frac{\xi}{|\xi|} 
= 
\partial_t Q_{(\xi)}^k \, .
\end{equation}
Hence
\begin{equation}\label{eq: 3}
	\div(W_{\xi,K,n_0}^p \otimes W_{\xi,K,n_0}^p) 
	=
	 \frac{1}{K}\sum_{k=n_0+1}^{K+n_0} \div(	W_{(\xi)}^k \otimes W_{(\xi)}^k) 
	=
	\partial_t \left(   \frac{1}{K} \sum_{k=n_0 + 1}^{K + n_0} Q_{(\xi)}^k  \right) \, .
\end{equation}
The time corrector is defined as
\begin{equation}
	Q_{\xi,K,n_0}(t,x) := \frac{1}{K} \sum_{k=n_0 + 1}^{K + n_0} Q_{(\xi)}^k (t, x) \, .
\end{equation}

\subsection{Estimates on building blocks}

\begin{lemma}\label{lemma: first estimates}
	For any $N,M\ge 0$ integers and $p\in [1,\infty]$ there exists $C=C(N,M,p,|\xi|,\Phi,\psi)>0$ such that the following hold.
	\begin{equation}
		\| \nabla^N \partial_t^M \psi_{(\xi)}^k \|_{L^p(\TT)} \le C 2^{k(N+2M+1/2-1/p)} r_\parallel^{1/p - 1/2} \left( \frac{ \lambda}{r_\parallel} \right)^N  \left( \frac{ \lambda \mu}{r_\parallel} \right)^M \, ,
	\end{equation}

	\begin{equation}
		\| \nabla^N (\Phi')_{(\xi)}^k\|_{L^p(\TT)} 
		+ \| \nabla^N (\Phi'')_{(\xi)}^k\|_{L^p(\TT)}
		+	\| \nabla^N \phi_{(\xi)}^k\|_{L^p(\TT)}
		\le C 2^{k(N+1/2-1/p)} r_\perp^{1/p-1/2} \left( \frac{ \lambda}{r_\perp} \right)^N \, ,
	\end{equation}

	\begin{equation}
		\| \nabla^N \partial_t^M W_{(\xi)}^k \|_{L^p(\TT^2)} + \frac{r_{\parallel}}{r_\perp} \| \nabla^N \partial_t^M (W_{(\xi)}^k)^c \|_{L^p(\TT^2)}
		\le C 2^{k(N+ 2M +1 -2/p)} (r_{\parallel}r_\perp)^{1/p-1/2} \left(\frac{\lambda}{r_\perp}\right)^N \left( \frac{ \lambda \mu}{r_\parallel} \right)^M \, ,
	\end{equation}

	\begin{equation}
		2^k\mu \| \nabla^N \partial_t^M Q_{(\xi)}^k \|_{L^p(\TT^2)}
		\le 
		C2^{k(N + 2M + 2-2/p)}
		(r_{\parallel}r_\perp)^{1/p-1} \left(\frac{\lambda}{r_\perp}\right)^N \left( \frac{ \lambda \mu}{r_\parallel} \right)^M \, .
	\end{equation}
\end{lemma}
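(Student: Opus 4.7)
The plan is to treat all four estimates in a unified way, by first establishing the one-dimensional scaling of the rescaled profiles and then tracking how composition with $\lambda(x\cdot\xi + \mu 2^k t)$ introduces factors of $\lambda$ per spatial derivative and $\lambda\mu 2^k$ per time derivative. The basic one-dimensional estimate, for $\rho\in C^\infty_c((-1,1))$ and $r\in\{r_\perp,r_\parallel\}$,
\begin{equation*}
\|\partial^N \rho^k_r\|_{L^p(\TT)} \le C 2^{k(N+1/2-1/p)} r^{1/p-1/2-N},
\end{equation*}
follows directly from the definition of $\rho^k_r$ together with a change of variables; it uses only that $2^{-k}r<1$, so that the periodization does not create overlapping copies.

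For $\psi^k_{(\xi)}$ I would then exploit the fact that for $\lambda\in\NN$ the map $y\mapsto \lambda y$ is a measure-preserving endomorphism of $\TT$, so rescaling by $\lambda$ is norm-preserving; the chain rule applied to $\nabla^N\partial_t^M$ brings down a factor $\lambda^{N+M}(\mu 2^k)^M$ in front of the one-dimensional $(N{+}M)$-th derivative of $\psi^k_{r_\parallel}$, and a direct computation from the display above assembles the first estimate. The second estimate is identical in spirit, without time derivatives and with $r_\perp$ replacing $r_\parallel$.

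For the product $W^k_{(\xi)}=(\xi/|\xi|)\psi^k_{(\xi)}\phi^k_{(\xi)}$, I would use that $\psi^k_{(\xi)}$ is constant in $x\cdot\xi^\perp$ while $\phi^k_{(\xi)}$ is constant in $x\cdot\xi$, so that an orthonormal change of coordinates on $\TT^2$ factorises the $L^p(\TT^2)$ norm (up to a constant depending only on $|\xi|$) into a product of two $L^p(\TT)$ norms. Applying the Leibniz rule to $\nabla^N\partial_t^M W^k_{(\xi)}$ and using the previous two estimates, the dominant contribution is the one where all $N$ spatial derivatives fall on $\phi^k_{(\xi)}$, because $r_\perp\ll r_\parallel$ implies $\lambda/r_\perp\gg \lambda/r_\parallel$; this yields the third estimate. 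The bound on $(W^k_{(\xi)})^c$ is entirely analogous, with $\Phi''$ replacing $\phi$ and the prefactor $r_\perp/r_\parallel$ explicit. Finally, for $Q^k_{(\xi)}=(2^k\mu)^{-1}\xi(\psi^k_{(\xi)}\phi^k_{(\xi)})^2$, I would reduce to the previous case via $\|(\psi\phi)^2\|_{L^p}=\|\psi\phi\|_{L^{2p}}^2$ and the same factorisation in $L^{2p}$; multiplying through by $2^k\mu$ delivers the final estimate.

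The main obstacle is purely bookkeeping: keeping accurate track of the various powers of $2^k$, $\lambda$, $\mu$, $r_\perp$, $r_\parallel$ and verifying that the dominant term in each Leibniz expansion matches the stated form. A point worth noting is the factor $2^{2kM}$ attached to time derivatives: a single $\partial_t$ contributes $\lambda\mu 2^k$ from the chain rule, while the resulting $M$-th-derivative profile $\psi^{(M)}_{r_\parallel}$ carries an additional $(2^{-k}r_\parallel)^{-M}$ relative to $\psi_{r_\parallel}$, producing together the total $2^{2kM}(\lambda\mu/r_\parallel)^M$ that appears in the statement.
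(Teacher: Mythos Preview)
Your proposal is correct and is precisely the elementary scaling computation the paper has in mind; the paper itself omits the proof entirely, stating only that it ``is a simple computation.'' Your bookkeeping of the powers of $2^k$, $\lambda$, $\mu$, $r_\perp$, $r_\parallel$ is accurate---including the observation about the $2^{2kM}$ factor from time derivatives---and the factorisation of the $L^p(\TT^2)$ norm via the change of variables $(x\cdot\xi,\,x\cdot\xi^\perp)$ (with Jacobian $|\xi|^2$, absorbed into $C$) is exactly what makes the third and fourth estimates follow from the first two.
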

The proof of Lemma \ref{lemma: first estimates} is a simple computation, so we omit it. It implies the following, summing on $k$ and remininding that then terms in the sum in \eqref{eqn:Wtutto} have disjoint support,
 \begin{equation}\label{eq: W L2}
	\| W_{\xi,K,n_0}^p \|_{L^2(\TT^2)} + \frac{r_{\parallel}}{r_\perp} \| W_{\xi,K,n_0}^c \|_{L^2(\TT^2)}
	\le C \, 
\end{equation}
(in particular, this says that the principal part is much smaller than the corrector),
\begin{equation}\label{eq: Q L2}
		\| Q_{\xi,K,n_0} \|_{L^2(\TT^2)} \le \frac{C}{\mu (r_\parallel r_\perp)^{1/2}} \, ,
\end{equation}
and
\begin{equation}\label{eq: W Lp}
	\| W_{\xi,K,n_0}^p \|_{L^p(\TT^2)} + \frac{r_{\parallel}}{r_\perp} \| W_{\xi,K,n_0}^c \|_{L^p(\TT^2)}
	\le C \frac{(r_\perp r_\parallel)^{1/p-1/2}}{K^{1/2}} \, ,
	\quad \text{for any $p\in [1,2)$} \, .
\end{equation}

\begin{lemma}[Lorentz estimates]\label{lemma: Lorentz}
There exists $C=C(|\xi|, \Phi,\psi)>0$ such that
\begin{equation}
	\| D W_{\xi,K,n_0} \|_{L^{1,\infty}} \le C\frac{ \lambda}{K^{1/2}} \left(\frac{ r_\parallel}{r_\perp} \right)^{1/2}\, ,
\end{equation}
%	\begin{equation}
%		\| \omega_{\xi,K,n_0} \|_{L^{1,q}} \le \frac{C n_* \lambda_{q+1}}{K^{1/2-1/q}} \left(\frac{ r_\parallel}{r_\perp} \right)^{1/2}\, ,
%	\end{equation}
   \begin{equation}
   	\| D Q_{\xi,K,n_0} \|_{L^{1,\infty}} \le C \frac{\lambda}{\mu r_\perp K} \, .
   \end{equation}
\end{lemma}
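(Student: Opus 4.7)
The plan is to exploit the disjointness of the spatial supports of the $K$ summands in $W^p_{\xi,K,n_0}$, $W^c_{\xi,K,n_0}$ and $Q_{\xi,K,n_0}$, combined with sharp $L^\infty$ and support-size bounds for each summand. Because $\supp \phi^k_{r_\perp}$ (and, by \eqref{eqn:thin-tube}, also $\supp (\Phi'')^k_{r_\perp}$) are pairwise disjoint in $k$, the spatial supports of $W^k_{(\xi)}$, $(W^k_{(\xi)})^c$ and $Q^k_{(\xi)}$ on $\TT^2$ (at each fixed time) are pairwise disjoint across $k$, and each is contained in a union of thin grid rectangles of total measure bounded by $A_k \le C\, 2^{-2k} r_\perp r_\parallel$.

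Next I would invoke Lemma~\ref{lemma: first estimates} with $p=\infty$, $N=1$, $M=0$ to obtain (using $r_\perp \ll r_\parallel$) that the $k$-th summand $K^{-1/2}(DW^k_{(\xi)} + D(W^k_{(\xi)})^c)$ of $DW_{\xi,K,n_0}$ is pointwise bounded by
\[
c_k := C\, \lambda K^{-1/2} 2^{2k} r_\perp^{-3/2} r_\parallel^{-1/2},
\]
the corrector contribution being smaller by a factor $r_\perp/r_\parallel$. Analogously the $k$-th summand $K^{-1} DQ^k_{(\xi)}$ of $DQ_{\xi,K,n_0}$ is bounded by $\tilde c_k := C\, \lambda K^{-1} 2^{2k} \mu^{-1} r_\perp^{-2} r_\parallel^{-1}$. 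A direct computation gives the crucial \emph{$k$-uniform} identities
\[
A_k c_k \le C \lambda K^{-1/2} (r_\parallel/r_\perp)^{1/2}, \qquad A_k \tilde c_k \le C \lambda (K\mu r_\perp)^{-1}.
\]

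The final step is to compute $\|DW_{\xi,K,n_0}\|_{L^{1,\infty}} = \sup_{t>0} t\,|\{|DW_{\xi,K,n_0}|>t\}|$. By disjoint supports the level set of the sum is the disjoint union of the level sets of its summands, and each summand has level set measure at most $A_k \mathbf{1}_{\{t < c_k\}}$; therefore
\[
|\{|DW_{\xi,K,n_0}| > t\}| \;\le\; \sum_{k : c_k > t} A_k.
\]
Letting $k_t$ be the smallest $k$ with $c_k > t$, the geometric decay $A_k \lesssim 4^{-k}$ yields $\sum_{k\ge k_t} A_k \le 2\, A_{k_t}$, hence $t\,|\{|DW_{\xi,K,n_0}|>t\}| \le 2\, c_{k_t} A_{k_t} \le C\lambda K^{-1/2}(r_\parallel/r_\perp)^{1/2}$ uniformly in $t$. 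The bound for $DQ_{\xi,K,n_0}$ follows identically with $\tilde c_k$ in place of $c_k$.

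The main obstacle is that the naive quasi-triangle inequality $\|\sum_k f_k\|_{L^{1,\infty}} \lesssim \sum_k \|f_k\|_{L^{1,\infty}}$ would lose a factor of order $K$ (since each $\|f_k\|_{L^{1,\infty}} \simeq A_k c_k$ is already of the right final size), giving a vacuous estimate. One must genuinely exploit the disjoint-support identity for level sets, together with the geometric decay of the support measures $A_k$, to avoid this loss.
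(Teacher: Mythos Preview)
Your proposal is correct and follows essentially the same argument as the paper: both exploit the disjoint supports to write the superlevel set of the sum as a disjoint union, discard the summands whose $L^\infty$ bound lies below the threshold, and control the remaining contributions via the geometric decay of the support measures $A_k\sim 2^{-2k}r_\perp r_\parallel$. The paper packages the computation through auxiliary functions $\Omega_i^k$ with $|\Omega_i^k|\le C\,2^{2k}(r_\perp r_\parallel)^{-1}$ and the cutoff index $k_*$, which is exactly your $k_t$; your observation that the product $A_k c_k$ is uniform in $k$ is the same mechanism that yields the paper's bound $\sum_{k\ge k_*} C\,2^{-2k}r_\perp r_\parallel \le C/(sK^{1/2})$.
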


\begin{proof}
	Observe that
	\begin{align*}
		|D W_{(\xi)}^k| & =	\lambda 2^k | r_\parallel^{-1} \frac{\xi}{|\xi|} \otimes \frac{\xi}{|\xi|} (\psi')^k_{(\xi)}(x,t)\phi^k_{(\xi)}(x)
		+  r_\perp^{-1} \frac{\xi}{|\xi|}\otimes \frac{\xi^\perp}{|\xi|} \psi^k_{(\xi)}(x,t)(\phi')^k_{(\xi)}(x)|
		\\& \le  \lambda 2^k r_\perp^{-1}  ( |(\psi')^k_{(\xi)}(x,t)||\phi^k_{(\xi)}(x)| + |\psi^k_{(\xi)}(x,t)||(\phi')^k_{(\xi)}(x)|   )
		\\& = \lambda \left(\frac{ r_\parallel}{r_\perp} \right)^{1/2}  \frac{1}{2^{-k}(r_\perp r_\parallel)^{1/2}}( |(\psi')^k_{(\xi)}(x,t)||\phi^k_{(\xi)}(x)| + |\psi^k_{(\xi)}(x,t)||(\phi')^k_{(\xi)}(x)|   )
		\\& := \lambda\left(\frac{ r_\parallel}{r_\perp} \right)^{1/2} \Omega_1^k(x,t) \, ,
	\end{align*}
and similarly
 \begin{align*}
    	| D Q_{(\xi)}^k | \le \frac{\lambda}{\mu r_\perp} \Omega_2^k(x,t) \, ,
    \end{align*}
        where for $i=1,2$
    \begin{equation}\label{z1}
    	|\Omega_i^k| \le C 2^{2k}(r_\perp r_\parallel)^{-1}\, ,
    	\quad
    	\Leb2(\supp (\Omega_i^k)) \le C 2^{-2k} r_\perp r_\parallel \, ,
    	\quad
    	\supp(\Omega_i^k)\cap \supp(\Omega_i^{k'})= \emptyset\, , \, \, \, \text{for $k\neq k'$} \, .
    \end{equation}
    Let us know fix $s\ge 1$ and $k_*$ the smallest integer satisfying $k_*\ge n_0+1$ and $C2^{2k^*} \ge s K^{1/2} r_\perp r_\parallel$. It holds
    \begin{equation}
    \Leb2\left(\left\lbrace\frac{1}{K^{1/2}} \sum_{k=n_0+1}^{K + n_0} \Omega_1^k \ge s \right\rbrace \right)
    =
    \sum_{k = n_0+1}^{K + n_0} \Leb2(\{\Omega_1^k \ge s K^{1/2}\})
    \le 
    \sum_{k=k_*}^{K + n_0} \Leb2(\{\Omega_1^k \ge s K^{1/2}\}) \, .
    \end{equation}
    From \eqref{z1} and the choice of $k^*$ we get
    \begin{align*}
    	\sum_{k=k_*}^{K + n_0} \Leb2(\{\Omega_k \ge s K^{1/2} \})
    	\le
    	\sum_{k=k_*}^{K+n_0} C 2^{-2k} r_\perp r_\parallel
    	\le  \frac{C}{ s K^{1/2}}\sum_{k\ge k^*} 2^{2k^*-2k} 
    	 \le \frac{C}{s K^{1/2}}\, ,
    \end{align*}
    hence
    \begin{equation}
    	\| D W_{\xi,K,n_0}^p \|_{L^{1,\infty}}
    	\le 
    	  \lambda\left(\frac{ r_\parallel}{r_\perp} \right)^{1/2} \|  \frac{1}{K^{1/2}} \sum_{k=n_0+1}^{K+n_0} \Omega^k \|_{L^{1,\infty}}
    	 \le C^2 \frac{\lambda}{K^{1/2}} \left(\frac{ r_\parallel}{r_\perp} \right)^{1/2}\, ,
    \end{equation}
   the estimate on $\| D W^c_{\xi,K,n_0} \|_{L^{1,\infty}}$ can be obtained following the same strategy.
   An analogous argument gives
   \begin{equation}
   	\Leb2\left(\left\lbrace\frac{1}{K} \sum_{k=n_0+1}^{K + n_0} \Omega_2^k \ge s \right\rbrace \right)
   	\le \frac{C}{s K} \, ,
%   	=
%   	\sum_{k = n_0+1}^{K + n_0} \Leb2(\{\Omega_1^k \ge t K^{1/2}\})
%   	\le 
%   	\sum_{k=k_*}^{K + n_0} \Leb2(\{\Omega_1^k \ge t K^{1/2}\}) \, ,
   \end{equation}
   yielding
   \begin{equation*}
   	\| D Q_{\xi,K,n_0} \|_{L^{1,\infty}}
   	\le 
   	C\frac{\lambda}{\mu r_\perp} \|  \frac{1}{K} \sum_{k=n_0+1}^{K+n_0} \Omega^k \|_{L^{1,\infty}}
   	\le 
   	C^2 \frac{ \lambda}{ \mu r_\perp K}
   	\, .\qedhere
   \end{equation*}
\end{proof}

\begin{remark}\label{rmk:improved Lorentz}
	It is not hard to prove the following extension of Lemma \ref{lemma: Lorentz}. For any $q\ge 1$ it holds
	\begin{equation}
		\| D W_{\xi,K,n_0} \|_{L^{1,q}} \le C\frac{ \lambda}{K^{1/2 - 1/q}} \left(\frac{ r_\parallel}{r_\perp} \right)^{1/2}\, ,
	\end{equation}
	\begin{equation}
		\| D Q_{\xi,K,n_0} \|_{L^{1,\infty}} \le C \frac{\lambda}{\mu r_\perp K^{1-1/q}} \, .
	\end{equation}
\end{remark}

\begin{lemma}\label{lemma: time correction}
	There exists a smooth $\lambda$-periodic function $A_{\xi, K, n_0} : \TT^2 \to \text{Sym}_2$ such that
	\begin{equation}\label{z100}
		\partial_t W_{\xi,K,n_0} = \div(A_{\xi,K,n_0}) \, ,
	\end{equation}
    \begin{equation}\label{z11}
    	\| A_{\xi,K,n_0} \|_{L^1} \le C(|\xi|,\Phi,\psi)  \mu K^{1/2} r_\perp^{3/2} r_{\parallel}^{-1/2} \, .
%    	
%    	 (r^{-1}_\parallel r_\perp) (r_\parallel r_\perp)^{1/2} \, .
    \end{equation}
\end{lemma}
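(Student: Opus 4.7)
The plan is to build an explicit stream function for $V_k := W^k_{(\xi)} + (W^k_{(\xi)})^c$ (which is divergence-free by construction), turning the problem into writing the planar curl $\nabla^\perp g^k$ as the divergence of a symmetric matrix whose $L^1$ norm is comparable to $\|g^k\|_{L^1}$, rather than to the much larger $\|V_k\|_{L^1}$.

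First I would guess and verify that
\[
V_k = \nabla^\perp \chi^k, \qquad
\chi^k := \frac{2^{-k} r_\perp}{\lambda |\xi|}\, \psi^k_{(\xi)}(t, x)\, (\Phi'')^k_{(\xi)}(x),
\]
by a direct computation using the product structure of $\chi^k$ together with the paper's chain rule $2^{-k} r_\perp \partial_{x_1} (\Phi'')^k_{r_\perp} = -\phi^k_{r_\perp}$ (and its obvious analogue for $\psi$). Differentiating in $t$ then reduces the target $\div A_k = \partial_t V_k$ to $\div A_k = \nabla^\perp g^k$, where
\[
g^k := \partial_t \chi^k = \frac{\mu\, 2^k\, r_\perp}{|\xi|\, r_\parallel}\, \dot\psi^k_{(\xi)}(t, x)\, (\Phi'')^k_{(\xi)}(x).
\]

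Next, working in the orthonormal frame $(\xi/|\xi|, \xi^\perp/|\xi|)$, I would set
\[
A_k := -g^k\, \frac{\xi \otimes \xi^\perp + \xi^\perp \otimes \xi}{|\xi|^2} + d^k\, \frac{\xi^\perp \otimes \xi^\perp}{|\xi|^2}, \qquad
d^k := \frac{2 \mu\, 2^k\, r_\perp^2}{|\xi|\, r_\parallel^2}\, \ddot\psi^k_{(\xi)}\, (\Phi')^k_{(\xi)},
\]
which is symmetric and $\lambda^{-1}$-periodic; the diagonal entry $d^k$ is chosen precisely so that, in coordinates $y_1 := x \cdot \xi/|\xi|$, $y_2 := x \cdot \xi^\perp/|\xi|$, one has $\partial_{y_2} d^k = 2 \partial_{y_1} g^k$, which follows from the chain-rule identities relating $(\Phi')^k \to (\Phi'')^k$ and $\dot\psi^k \to \ddot\psi^k$. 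A short direct computation then gives $\div A_k = \nabla^\perp g^k = \partial_t V_k$. For the $L^1$ bound, Lemma~\ref{lemma: first estimates} yields $\|\dot\psi^k_{(\xi)}\|_{L^1(\TT)}, \|\ddot\psi^k_{(\xi)}\|_{L^1(\TT)} \le C 2^{-k/2}r_\parallel^{1/2}$ and $\|(\Phi')^k_{(\xi)}\|_{L^1(\TT)}, \|(\Phi'')^k_{(\xi)}\|_{L^1(\TT)} \le C 2^{-k/2}r_\perp^{1/2}$, whence Fubini in the two orthogonal directions gives $\|g^k\|_{L^1(\TT^2)} \le C \mu r_\perp^{3/2}/r_\parallel^{1/2}$ and $\|d^k\|_{L^1(\TT^2)} \le C \mu r_\perp^{5/2}/r_\parallel^{3/2}$, the latter negligible since $r_\perp \ll r_\parallel$; hence $\|A_k\|_{L^1(\TT^2)} \le C \mu r_\perp^{3/2}/r_\parallel^{1/2}$.

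Finally, setting $A_{\xi, K, n_0}(t, x) := K^{-1/2}\sum_{k = n_0+1}^{n_0 + K} A_k(t + t_k, x)$ gives $\partial_t W_{\xi, K, n_0} = \div A_{\xi, K, n_0}$ by linearity, and since the supports of the $A_k$'s in the variable $x \cdot \xi^\perp$ lie in the disjoint bands $(3 \cdot 2^{-k} r_\perp, 5 \cdot 2^{-k} r_\perp)$, additivity of $L^1$ on disjoint supports yields the claimed bound $\|A_{\xi, K, n_0}\|_{L^1} \le C K^{1/2}\mu r_\perp^{3/2}/r_\parallel^{1/2}$. The main obstacle is producing a sharp $A_k$: the natural ansatz $A_k := V_k \otimes c_k + c_k \otimes V_k$ with $c_k := \mu 2^k \xi/|\xi|^2$ the transport velocity of $V_k$ (which satisfies $\div A_k = \partial_t V_k$ because $V_k$ is divergence-free and $\partial_t V_k = c_k \cdot \nabla V_k$) gives only $\|A_k\|_{L^1} \lesssim \mu (r_\perp r_\parallel)^{1/2}$, losing the factor $r_\parallel/r_\perp$; the sharp bound requires cancelling the dominant $\xi \otimes \xi$ contribution (coming from $W^k \otimes c_k + c_k \otimes W^k$) by adding a divergence-free symmetric Airy-type tensor, and the explicit formula for $A_k$ above is the outcome of that cancellation.
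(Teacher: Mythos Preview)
Your proof is correct and follows essentially the same route as the paper: write down an explicit symmetric tensor $A_k$ for each $k$ whose divergence equals $\partial_t(W^k_{(\xi)}+(W^k_{(\xi)})^c)$, estimate its $L^1$ norm via Lemma~\ref{lemma: first estimates}, and sum. Your stream-function framing $V_k=\nabla^\perp\chi^k$ is a clean way to treat $W^k_{(\xi)}$ and $(W^k_{(\xi)})^c$ simultaneously, but the resulting tensor $A_k$ coincides (up to harmless factors of $|\xi|$) with the paper's $A_{(\xi),k}+A^c_{(\xi),k}$, and the $L^1$ bound is obtained identically; your closing remark on why the transport ansatz $V_k\otimes c_k+c_k\otimes V_k$ loses a factor $r_\parallel/r_\perp$ is a nice piece of motivation the paper does not make explicit.
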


\begin{proof}
	Setting 
	\begin{equation}
		A_{(\xi),k} 
		:=  -2^k \left(\frac{r_\perp}{r_\parallel}\right)\mu  \left(  \left(\frac{\xi}{|\xi|} \otimes  \frac{\xi^{\perp}}{|\xi|} + \frac{\xi^{\perp}}{|\xi|}\otimes \frac{\xi}{|\xi|} \right)    (\psi')_{(\xi)}^k (\Phi'')_{(\xi)}^k + \frac{r_\perp}{r_\parallel} \frac{\xi^\perp}{|\xi|} \otimes \frac{\xi^\perp}{|\xi|}  (\psi'')_{(\xi)}^k (\Phi')_{(\xi)}^k \right) \, ,
	\end{equation}
	\begin{equation}
		A_{(\xi),k}^c := 2^k	\left(\frac{r_\perp}{r_\parallel}\right)^2 \mu 
		\left(  \left(\frac{\xi}{|\xi|} \otimes  \frac{\xi^{\perp}}{|\xi|} + \frac{\xi^{\perp}}{|\xi|}\otimes \frac{\xi}{|\xi|}\right)    (\psi')_{(\xi)}^k (\Phi')_{(\xi)}^k - \frac{r_\perp}{r_\parallel}\frac{\xi^\perp}{|\xi|} \otimes \frac{\xi^\perp}{|\xi|}  (\psi'')_{(\xi)}^k (\Phi)_{(\xi)}^k  \right) \, ,
	\end{equation}
    it holds
    \begin{align*}
    	\partial_t W_{(\xi)}^k 
    	&= 2^{2k} \mu \lambda r_{\parallel}^{-1} \xi ( \psi')_{(\xi)}^k(x, t) \phi_{(\xi)}^k(x) 
    	\\& = 
    	-2^k\mu r_\parallel^{-1} r_\perp
    	\div\left(   \left(\frac{\xi}{|\xi|} \otimes  \frac{\xi^{\perp}}{|\xi|} + \frac{\xi^{\perp}}{|\xi|}\otimes \frac{\xi}{|\xi|}\right)    (\psi')_{(\xi)}^k (\phi'')_{(\xi)}^k + \frac{\xi^\perp}{|\xi|} \otimes \frac{\xi^\perp}{|\xi|}  (\psi'')_{(\xi)}^k (\phi')_{(\xi)}^k  \right)
    	\\&
    	=\div( A_{(\xi),k} ) \, ,
    \end{align*}  
and similarly    \begin{align*}
     	\partial_t (W_{(\xi)}^k)^c 
     	&=  \frac{r_\perp}{r_\parallel} 2^{2k} \mu \lambda r_{\parallel}^{-1} \xi ( \psi')_{(\xi)}^k(x, t) (\Phi'')_{(\xi)}^k(x) 
%     	\\& = 
%     	2^k\left(\frac{r_\perp}{r_\parallel}\right)^2 \mu \,
%     	\div\left(  \left(\frac{\xi}{|\xi|} \otimes  \frac{\xi^{\perp}}{|\xi|} + \frac{\xi^{\perp}}{|\xi|}\otimes \frac{\xi}{|\xi|}\right)    (\psi')_{(\xi)}^k (\Phi')_{(\xi)}^k - \frac{\xi^\perp}{|\xi|} \otimes \frac{\xi^\perp}{|\xi|}  (\psi'')_{(\xi)}^k (\Phi)_{(\xi)}^k  \right)
%     	\\&
     	=\div( A_{(\xi),k}^c ) \, .
     \end{align*}  
     Hence \eqref{z100} is satisfied. Defining
     \begin{equation}
     	A_{\xi,K,n_0} 
     	:= \frac{1}{K^{1/2}} \sum_{k=n_0+1}^{K + n_0} (A_{(\xi),k} + A_{(\xi),k}^c) \, 
     \end{equation}
and arguing as in Lemma \ref{lemma: first estimates}, we obtain that
      \begin{equation}
      	\| A_{(\xi),k} \|_{L^1}  + \| A_{(\xi),k}^c \|_{L^1}
      	\le C(|\xi|,\Phi,\psi) \mu K^{1/2} r_{\perp} r_\parallel^{-1} (r_\perp r_\parallel)^{1/2} \, ,
      \end{equation}
      which yields  \eqref{z11}.
\end{proof}

\subsection{Combinatorial lemma}\label{subsec:combinatorial}
The following proposition shows that, up to a suitable (time) translation of each element in the bundle, the building blocks associated to different directions can be taken disjoint.

\begin{propos}\label{prop: comb}
	Let $\xi_1=e_1$, $\xi_2 = e_2$, $\xi_3= e_1+e_2$ and $\xi_4= e_1-e_2$.
	Then for $n_0= 5K$ the functions in the family $\{ W_{(\xi_{i+1})}^k(x ,t+ i \mu^{-1} 2^{-5K})\}_{k=n_0,\ldots,n_0+K;\; i=0,1,2,3}$ have all supports mutually disjoint in space-time.
\end{propos}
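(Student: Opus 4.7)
The plan is a case analysis on whether the two directions $\xi_{i+1}$ and $\xi_{i'+1}$ coincide.

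\textbf{Same direction ($i = i'$, $k \ne k'$).} Here the time shifts $i\mu^{-1}2^{-5K}$ agree, so I would reduce to showing disjointness of $\supp W^k_{(\xi_{i+1})}$ and $\supp W^{k'}_{(\xi_{i+1})}$. For this I plan to use the fact that the transverse factor $\phi^k_{(\xi_{i+1})}$ depends on $x$ only through $\lambda x \cdot \xi_{i+1}^\perp \bmod 1$, with support there equal to $(3\cdot 2^{-k}r_\perp,\,5\cdot 2^{-k}r_\perp)$. By the nested design of $\rho^k_{r_\perp}$ recorded just after its definition, these intervals are pairwise disjoint for distinct $k \ge n_0+1$ (they all fit disjointly inside $(0,\, 5 r_\perp 2^{-n_0})$), so the transverse $\phi$-factors, and hence the full $W^k_{(\xi_{i+1})}$'s, already have disjoint spatial supports at every time.

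\textbf{Different directions ($i \ne i'$).} This is the crux. The key structural input is that the four vectors $\{e_1, e_2, e_1+e_2, e_1-e_2\}$ are pairwise linearly independent. Supposing $(x,t)$ is a common point of the two shifted supports, the four membership relations give four mod-$1$ constraints: the two transverse ones localize $\lambda x$ up to $\mathbb{Z}^2$, and the two parallel ones involve $\lambda \mu 2^k t + \lambda i \cdot 2^{k-5K}$ and $\lambda \mu 2^{k'} t + \lambda i'\cdot 2^{k'-5K}$. I would solve the transverse pair for $\lambda x \bmod \mathbb{Z}^2$, substitute into the parallel pair, and eliminate $t$. The result is a scalar Diophantine relation linking the integers $i, i', k, k'$, the small parameters $r_\perp, r_\parallel$, and the integer time-shift phases. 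The specific choice $n_0 = 5K$ with $k, k' \in [n_0+1,\, n_0+K]$ is tailored to organize the four direction-slots so that this Diophantine relation has no solution under $r_\perp \ll r_\parallel \ll 1$, producing the desired contradiction.

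\textbf{Main obstacle.} The principal technical work is the enumeration over the six unordered pairs of directions together with careful tracking of the widths $r_\parallel$ (parallel) and $r_\perp$ (transverse) under the rational coordinate changes between $(\xi_{i+1}, \xi_{i+1}^\perp)$ and $(\xi_{i'+1}, \xi_{i'+1}^\perp)$. The crucial check is that, in each of the six pairs, the residual mismatch forced by the time-shift schedule $i\mu^{-1}2^{-5K}$ exceeds the combined parallel width tolerance, which is precisely what the scaling $n_0 = 5K$ is designed to deliver.
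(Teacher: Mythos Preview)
Your same-direction case ($i=i'$, $k\neq k'$) is correct and coincides with what the paper uses implicitly: disjointness of the transverse profiles $\phi^k_{r_\perp}$ already separates the supports in space, uniformly in time.

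For the different-direction case you propose an essentially algebraic route---write out the four mod-$1$ constraints, solve the transverse pair, substitute, and eliminate $t$ to reach a Diophantine obstruction---and you flag the resulting six-pair enumeration as the main obstacle. The paper does \emph{not} go this way. Instead it proves a separate geometric lemma (Lemma~\ref{lemma: comb}) whose mechanism is dynamical rather than arithmetic:
\begin{itemize}
\item[(a)] By \eqref{eqn:thin-tube}, the whole bundle $\{W^k_{(\xi)}\}_k$ lives (in space) in a single thin tube along $\xi$ of width $\sim 2^{-n_0}$; two such tubes for linearly independent directions meet only in a small ball $B_R$ around the origin (and its lattice translates), with $R\sim r_\parallel 2^{-n_0}$.
\item[(b)] Each support moves along its tube at speed $\mu 2^k$, and the velocity ratios $2^{k-h}$ are powers of two in $\{2^{-K},\dots,2^K\}$. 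Hence whenever $\supp W^k_{(\xi_1)}(\cdot,t)$ is near the origin there is a nearby time $\bar t$ at which it sits exactly at its initial position, and at $\bar t$ the competing support $\supp W^h_{(\xi_2)}(\cdot,\bar t+t_0)$ is a $2^{-K}$-quantized displacement away from \emph{its} position at time $t_0$.
\item[(c)] The time shift $t_0$ produces a displacement of order $2^{n_0-7K}=2^{-2K}$, which dominates both $R$ and the error $|t-\bar t|\mu 2^h\le R\,2^K$ introduced in passing from $t$ to $\bar t$. This rules out simultaneous presence in $B_R$.
\end{itemize}
This single argument handles all pairs of directions at once and explains transparently why the scale $n_0=5K$ is the right one: it makes the ``danger zone'' $B_R$ small enough relative to the quantum $2^{-K}$ and the shift-induced displacement.

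Your Diophantine plan is not wrong in spirit, but as written it has a real gap beyond being unfinished. After eliminating $t$ (say $k\ge k'$, subtract $2^{k-k'}$ times the second parallel relation from the first), the time-shift contribution becomes
\[
\lambda\, i\, 2^{k-5K}\;-\;2^{k-k'}\,\lambda\, i'\, 2^{\,k'-5K}\;=\;\lambda(i-i')\,2^{k-5K}.
\]
Since $\lambda\in\mathbb{Z}$ and $k\ge n_0=5K$, this is an \emph{integer}, hence vanishes modulo~$1$; the shift drops out of your scalar relation entirely and cannot produce the contradiction you are aiming for. (This is consistent with the fact that the paper's lemma is stated with the smaller shift scale $t_0\sim\mu^{-1}2^{-7K}$, for which the analogous phase $\lambda\,2^{k-7K}$ is genuinely non-integral in the range $k\le 6K$.) To make your approach work you would need to avoid killing the shift by this subtraction---which effectively forces you back to the paper's idea of comparing \emph{positions} along the tube rather than phases mod~$1$.
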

\begin{proof}
	We apply Lemma \ref{lemma: comb} below to the families $\{ W_{(\xi_2)}^k(x ,t+ i \mu^{-1} 2^{-5K})\}_{k=n_0,...,n_0+K}$ and $\{ W_{(\xi_2)}^k(x,t+ j \mu^{-1} 2^{-5K})\}_{k=n_0,...,n_0+K}$; up to shifting the time axis, we can assume that $i=0$ and that $j\in \{1,2,3\}$ and conclude the proof.\end{proof}

\begin{lemma}\label{lemma: comb}
	Let $\xi_1, \xi_2 \in \{ e_1,e_2, e_1+e_2, e_1-e_2\} $ be two different vector fields. % e_2$, $\xi_3= ...$ and ...
	Let us consider two families $\{ W_{(\xi_1)}^k(x,t)\}_{k=n_0,\ldots,n_0+K}$ and $\{ W_{(\xi_2)}^k(x ,t+t_0)\}_{k=n_0,\ldots,n_0+K}$ for some $t_0 \in [\mu^{-1} 2^{-7K}, \mu^{-1} 2^{-7K+2}] $ and for $n_0=5K$.
	Then the supports of all these functions are disjoint in space-time, namely
	$$
	W_{(\xi_1)}^k(x,t) \otimes W_{(\xi_2)}^h(x,t+t_0)=0 \qquad \mbox{for all } k, h \in \{1,..., K\} .
	$$ 
\end{lemma}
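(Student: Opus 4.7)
The plan is to show via a measure-counting argument that the "bad" set of $t_0$ (those for which a space-time collision occurs for some pair $k, h$) has total measure strictly less than the length of $[\mu^{-1} 2^{-7K}, \mu^{-1} 2^{-7K+2}]$, yielding the existence of a good $t_0$ in the interval. The starting point is the observation that at fixed time $t$ the supports of $W_{(\xi_1)}^k(\cdot, t)$ and $W_{(\xi_2)}^h(\cdot, t+t_0)$ are governed by the four modular conditions $\lambda x \cdot \xi_1 \in (J_k - \lambda \mu 2^k t) + \ZZ$, $\lambda x \cdot \xi_1^\perp \in I_k + \ZZ$, and the analogous two conditions for $\xi_2$ at time $t+t_0$; a space-time collision for the pair $(k,h)$ is then a single $x \in \TT^2$ satisfying all four.

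Next, introduce $(u_1, u_2) := (\lambda x \cdot \xi_1, \lambda x \cdot \xi_1^\perp)$ and use the fact that every angle between distinct elements of $\{e_1, e_2, e_1+e_2, e_1-e_2\}$ is a multiple of $\pi/4$, so that the change of basis from $(u_1, u_2)$ to $(\lambda x \cdot \xi_2, \lambda x \cdot \xi_2^\perp)$ is an integer-entry linear map. The two $\xi_2$-conditions then become affine constraints on $(u_1, u_2)$ with integer coefficients; eliminating $u_1, u_2$ from the four constraints reduces the existence of $x$ to a pair of modular conditions on $t$ alone, of the form $\lambda \mu 2^k t \in \Sigma_1 + \ZZ$ and $\lambda \mu 2^h(t+t_0) \in \Sigma_2 + \ZZ$, where $\Sigma_1, \Sigma_2$ are explicit intervals of length $\lesssim 2^{-\min(k,h)} r_\parallel$ depending only on $k, h, \xi_1, \xi_2$. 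Eliminating $t$ as well, the set of bad $t_0$ for this single pair becomes the Minkowski difference $B_1 - B_2^{(0)}$ of two periodic interval systems (periods $1/(\lambda \mu 2^k)$ and $1/(\lambda \mu 2^h)$); a direct computation of this difference, using that $\gcd(2^h, 2^k) = 2^{\min(k,h)}$, gives an upper bound $C r_\parallel \cdot 2^{|h-k| - \min(k,h)}$ on its density in $\RR$. With $k, h \ge n_0+1 = 5K+1$ and $|h-k| \le K-1$, this density is bounded by $C r_\parallel 2^{-4K-2}$.

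Summing over the $\le K^2$ pairs $(k,h)$ and the finitely many ordered direction pairs $(\xi_1, \xi_2)$, the total bad measure inside $[\mu^{-1} 2^{-7K}, \mu^{-1} 2^{-7K+2}]$ is at most $C K^2 r_\parallel \mu^{-1} 2^{-11K}$, strictly smaller than the interval length $3 \mu^{-1} 2^{-7K}$ for $K$ large (or $r_\parallel$ small enough); a good $t_0$ outside all bad sets therefore exists. The main obstacle I foresee is the uniform bookkeeping of the algebraic reduction across the direction pairs: the $\pi/2$ cases (such as $(e_1, e_2)$ or $(e_1 \pm e_2, e_1 \mp e_2)$) produce the clean change of basis $(v_1, v_2) = (\pm u_2, \mp u_1)$, while the $\pi/4$ cases produce skew integer relations like $(v_1, v_2) = (u_1+u_2, -u_1+u_2)$, which give slightly different forms of $\Sigma_1, \Sigma_2$ and a correspondingly different (but structurally identical) Minkowski-difference computation. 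In each case the density bound $C r_\parallel 2^{-4K}$ must be checked by hand, and the specific choice $n_0 = 5K$ is exactly what produces this decay rate.
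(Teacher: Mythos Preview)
Your proposal proves a different (weaker) statement than the lemma. The phrase ``for some $t_0 \in [\mu^{-1}2^{-7K},\mu^{-1}2^{-7K+2}]$'' is a hypothesis, not a conclusion: the lemma asserts that \emph{every} $t_0$ in that interval yields disjoint supports. Indeed, Proposition~\ref{prop: comb} applies the lemma with the three specific values $t_0 = j\mu^{-1}2^{-5K}$, $j=1,2,3$ (after a shift of the time axis), so a mere existence result is not enough to feed back into that proposition. Your measure-counting argument, by design, only produces \emph{one} good $t_0$ and says nothing about the particular shifts that the rest of the construction requires.

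The paper's proof is not probabilistic but a direct geometric argument that works for every $t_0$ in the interval. The key observation you are missing is that the spatial supports of the two families live in thin tubes of width $\sim r_\parallel 2^{-n_0}$ along the directions $\xi_1$ and $\xi_2$, so any collision can only happen inside a ball $B_R(0)$ with $R = r_\parallel 2^{-n_0+1}$. The argument then exploits the commensurability of the speeds $\mu 2^k$ and $\mu 2^h$: whenever $W_{(\xi_1)}^k$ passes through $B_R(0)$, one can find a nearby time $\bar t$ at which it sits exactly at its initial position; at $\bar t$ the $\xi_2$-blocks are displaced from their initial positions by integer multiples of $\xi_2/2^K$, and the time shift $t_0$ (its lower bound) guarantees they are at distance at least $2^{n_0-7K}$ from the origin. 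The choice $n_0=5K$ then makes $2^{n_0-7K} \gg R 2^K$, so even after accounting for $|t-\bar t|$ the $\xi_2$-blocks stay outside $B_R(0)$. Both the lower and upper bounds on $t_0$ are used explicitly in this chain of inequalities, which is why the conclusion holds for the whole interval, not just for a single $t_0$.
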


\begin{proof}
	The family $\{ W_{(\xi_1)}^k(x,t)\}_{k=n_0,\ldots,n_0+K}$ is supported by \eqref{eqn:thin-tube} in space in a tube along $\xi_1$ of size $r_\parallel2^{-n_0}$ and similarly the family $\{ W_{(\xi_2)}^k(x,t+t_0)\}_{k=n_0,\ldots,n_0+K}$ is supported in the tube along  $\xi_2$ of size $r_\parallel2^{-n_0}$. Since these two thin tubes intersect only in a neighborhood of the origin, we deduce that the supports of $W_{(\xi_1)}^k(x,t)$ and $W_{(\xi_2)}^h(x,t)$, where $h, k \in \{n_0,\ldots, n_0+K\}$, can intersect for some time $t>0$ only if they both belong to $B_{R}(0)$, where $R:= r_\parallel 2^{-n_0 + 1}$.
	
	We claim the following: {\it suppose that for a certain $t>0$ and $k \in \{n_0,\ldots, n_0+K\}$ we have $\supp  W_{(\xi_1)}^k (\cdot, t) \cap B_{r_\parallel2^{-n_0+1}} \neq \emptyset$. Then $\supp  W_{(\xi_2)}^h(\cdot, t+t_0) \cap B_R = \emptyset$ for every $h \in \{n_0,\ldots, n_0+K\}$}. 
	
	The previous claim excludes the simultaneous presence at any $t>0$ of the support of $W_{(\xi_1)}^k (\cdot, t)$ and the support of $W_{(\xi_2)}^h(\cdot, t+t_0)$ in $B_R(0)$, thereby concluding the proof of the lemma.
	
	We now prove the claim.
	Let us fix a time $t$ such that  $\supp  W_{(\xi_1)}^k(\cdot,t) \cap B_{R} \neq \emptyset$. Since $\supp  W_{(\xi_1)}^k(\cdot,t) $ is moving at constant speed $\mu 2^k$ along the tube on the torus, there exists $\bar t$
	such that $|t -\bar t | \leq R \mu^{-1} 2^{-k}$ 
	and $\supp  W_{(\xi_1)}^k(\cdot,\bar t) %\cap B_{r_\parallel2^{-N_0+1}}
	= \supp  W_{(\xi_1)}^k(x,0)$. At time $\bar t$ we have information about the position of $\supp  W_{(\xi_2)}^h(\cdot,\bar t+t_0)$; more precisely,  we have that 
	\begin{equation}
		\label{eqn:union-ball}
		\supp  W_{(\xi_2)}^h(\cdot,\bar t+t_0) \subseteq \bigcup_{n\in \NN} \Big(\supp  W_{(\xi_2)}^h(\cdot,t_0) + n \frac{\xi_2}{2^K}\Big)
	\end{equation}
	because the ratio between the (constant) velocity of $\supp  W_{(\xi_1)}^k(\cdot,t)$ and the velocity of $\supp  W_{(\xi_2)}^k(\cdot,t)$ is of the form $2^j$ for some $j \in \{-K,..., K\}$.

	In the union in the right-hand side of \eqref{eqn:union-ball}, thanks to the upper bound on $t_0$, the choice $n=0$ identifies the ball of the (finite) union at minimal distance from the origin for every $k$. By the lower bound on $t_0$ and the fact that the minimal velocity is $\mu 2^{n_0}$, we get that this distance is greater than $2^{n_0-7K}$.
	At time $t$ the distance between $\supp  W_{(\xi_2)}^h(\cdot, t+t_0)$ and $B_R(0)$ is therefore bigger than
	$$
	2^{n_0-7K} - |t-\bar t| \mu 2^h -R \geq 2^{n_0-7K} -  R2^{h-k} -R \geq 2^{n_0-7K} -  R2^{K} -R\geq 2^{n_0-7K} -  2^{-n_0+K+1} = 2^{-2K}- 2^{-4K+1}>0.
	$$
	This concludes the proof of the claim.	
\end{proof}

\subsection{Proof of Proposition \ref{lemma: building blocks}}
Let $\{ W_{(\xi_{i+1})}^k(x ,t+ i \mu^{-1} 2^{-5K})\}_{k=n_0,\ldots,n_0+K;\; i=0,1,2,3}$ be as in Proposition \ref{prop: comb}. Since $\supp W_{\xi_{i+1}}^k = \supp (W_{\xi_{i+1}}^k)^c = \supp Q_{\xi_{i+1}}^k$, by translating in time $(W_{\xi_{i+1}}^k)^c$ and $Q_{\xi_{i+1}}^k$ with $t_{k,i} := i \mu^{-1} 2^{-5K}$  we deduce that $W_{i+1}^p := W^p_{\xi_{i+1},K,n_0}$, $W_{i+1}^c := W^c_{\xi_{i+1},K,n_0}$, $Q_{i+1}:= Q_{\xi_{i+1},K,n_0}$
and $A_{i+1}:=A_{\xi,K,n_0}$ satisfy (v) in Lemma \ref{lemma: building blocks}. We refer the reader to Lemma \ref{lemma: time correction} for the construction of $A_{\xi,K,n_0}$. Properties (i) and (ii) in Lemma \ref{lemma: building blocks} are now immediate from \eqref{eq: 2}, \eqref{eq: 3} and Lemma \ref{lemma: time correction}. We are left with the proof of (iii) and (iv) in Lemma \ref{lemma: building blocks}. To do so we have to choose appropriately the parameters $\lambda, \mu, K, r_\perp$ and $r_\parallel$. Let $\delta < 1/2$ to be chosen later in terms of $\eps>0$, we set
\begin{equation}
	\lambda = \left(\frac{r_\perp}{r_\parallel}\right)^{-1/2} \delta^4
	\quad
	K= \left(\frac{r_\perp}{r_\parallel}\right)^{-2}\delta^4
	\quad
	\mu = (r_\perp r_\parallel)^{-1/2}\delta^{-1} \, ,
\end{equation}
leaving $r_\perp \ll r_\parallel \ll 1$ free.
From Lemma \ref{lemma: Lorentz}, Lemma \ref{lemma: time correction}, \eqref{eq: W L2}, \eqref{eq: Q L2} and \eqref{eq: W Lp} we deduce

\begin{equation}\label{zz4}
	\| D(W_i^c + W_i^p) \|_{L^{1,\infty}} \le C\frac{ \lambda}{K^{1/2}} \left(\frac{ r_\parallel}{r_\perp} \right)^{1/2}
	=  C \delta^2
	\, ,
\end{equation}

\begin{equation}\label{zz2}
	\| D Q_i \|_{L^{1,\infty}} 
	\le C \frac{\lambda}{\mu K r_\perp} 
	= C \delta \frac{r_\perp}{r_\parallel} 
	\le C \delta \, ,
\end{equation}

 \begin{equation}\label{eq:temporalerror}
	\|A_i \|_{L^1}
	\le C \mu K^{1/2} (r^{-1}_\parallel r_\perp) (r_\parallel r_\perp)^{1/2}
	= C \delta \, ,
\end{equation}

\begin{equation}\label{zz3}
	\| Q_i \|_{L^2(\TT^2)} \le \frac{C}{\mu(r_\parallel r_\perp)^{1/2}}
	= C\delta \, ,
\end{equation}

\begin{equation}
	\| W_i^p \|_{L^2} + \frac{ r_\parallel}{r_\perp} \| W_i^c \|_{L^2} \le 1 \, .
\end{equation}
The conclusions (iii) and (iv) in Lemma \ref{lemma: building blocks} follow by choosing first $\delta$ small enough so that $C\delta \le \eps$, and after $r_\perp \ll r_\parallel\ll 1$ so that
$\frac{r_\perp}{r_\parallel} \le \eps$ and $\lambda = \delta^4 r_\parallel^{1/2} r_\perp^{-1/2} \ge \eps^{-1}$.

\section{Definition of the perturbations}\label{sec:def perturbation}

Let us begin by observing that there exist $\Gamma_i \in  C^\infty(\text{Sym}_2, \mathbb{R})$, $i=1,\ldots, 4$ such that
	\begin{equation}
		R = \sum_{i=1}^4 \Gamma_i(R)^2 e_i \otimes e_i\, ,
		\quad \text{for any $R\in \textit{Sym}_2$ such that $|R-I|<1/8$} \, ,
	\end{equation}
	where $e_1 :=(1,0)$, $e_2:=(0,1)$, $e_3:= (1/\sqrt{2}, 1/\sqrt{2})$ and $e_4:= (1/\sqrt{2},-1/\sqrt{2})$.

   We can define, for instance,
	\begin{equation}
		\Gamma_1(R)^2 := R_{1,1}-R_{1,2}-\frac{1}{2} \, ,
		\quad
		\Gamma_2(R)^2 := R_{2,2} - R_{1,2} - \frac{1}{2} \, ,
		\quad
		\Gamma_3(R)^2 := 2R_{1,2} + \frac{1}{2} \, ,
		\quad
		\Gamma_4(R)^2 := \frac{1}{2} \, .
	\end{equation}
	It is immediate to show the identity $R= \sum_{i=1}^4 \Gamma_i(R)^2 e_i \otimes e_i$. Moreover, using that $|R-I|<1/8$, we deduce
	\begin{equation}
		\Gamma_1(R)^2 
		= \frac{1}{2} + (R_{1,1}-1) - R_{1,2}
		\ge \frac{1}{2} - |R_{1,1}-1| - |R_{1,2}|
		\ge \frac{1}{4} \, ,
	\end{equation}
	\begin{equation}
		\Gamma_2(R)^2 
		= \frac{1}{2} + (R_{2,2}-1) - R_{1,2}
		\ge \frac{1}{2} - |R_{2,2}-1| - |R_{1,2}|
		\ge \frac{1}{4} \, ,
	\end{equation}
	\begin{equation}
		\Gamma_3(R)^2 \ge \frac{1}{2} - 2| R_{1,2}| \ge 1/4 \, ,
	\end{equation}
	which implies that $\Gamma_i$ are smooth functions.

  \bigskip

We define
\begin{equation}
	a_{i}(t,x) := (10\chi(t)(|R_0(t,x)| + \| R_0 \|_{L^1}))^{1/2}
	\, \Gamma_i \Big(I - \frac{10^{-1}}{|R_0(t,x)| + \| R_0 \|_{L^\infty(L^1)}} R(t,x) \Big) \, ,
\end{equation}
where $\chi\in C^\infty(\RR)$ satisfies $0\le \chi \le 1$, $\chi = 0$ on $[0,t_0/2]$, and $\chi = 1$ on $[t_0, \infty)$. Our choice leads to 
\begin{equation}
	\sum_{i=1}^4 a_i(t,x)^2 \frac{\xi_i}{|\xi_i|} \otimes \frac{\xi_i}{|\xi_i|}
	=
	-R_0(t,x) + \chi(t) 10(|R_0(t,x)| + \| R_0 \|_{L^\infty(L^1)}) I \, ,
\end{equation}
where $\xi_1=(1,0)$, $\xi_2=(0,2)$, $\xi_3=(1,1)$ and $\xi_4=(1,-1)$. The latter implies that
\begin{equation}\label{eq: quadratic identity}
	-\div(R_0) = \div\Big(   	\sum_{i=1}^4 a_i(x,t)^2 \frac{\xi_i}{|\xi_i|} \otimes \frac{\xi_i}{|\xi_i|}   \Big) + \nabla P \, ,
\end{equation}
for some pressure term $P$.

We observe that the coefficient $a_i$ is a ``slow function'', namely its derivatives are estimated only in terms of the smootness of $R_0$
\begin{equation}
	\|\partial_t^M \nabla^N a_i\|_{L^\infty}
	\le C(t_0, \| R_0 \|_{C^{N+M}}, N, M) \, ,
\end{equation}
\begin{equation}
	\| a_i \|_{L^\infty(L^2)} \le 5 \| R_0 \|_{L^\infty(L^1)}^{1/2} \, . 
\end{equation}

\medskip

For $\eps >0$ to be chosen later, we consider the functions $W_i^p$, $W_i^c$, $Q_i$, $A_i$ from Proposition~\ref{lemma: building blocks}.
We define the new velocity field as the sum of the previous one, a principal perturbation, a divergence corrector and a temporal corrector   
    \begin{equation}
    	u_{1} := u_0 + u_{1}^{(p)} + u_{1}^{(c)} + u_{1}^{(t)} \, ,
    \end{equation}
    where
    \begin{equation}
    	u_{1}^{(p)} = \sum_{i=1}^4a_i(W_{i} + W_{i}^c) \, ,
    	\quad
    	u_{1}^{(c)} = - \sum_{i=1}^4 \mathcal{R}(\nabla a_i \cdot (W_{i}^p + W_{i}^c)) \, ,
    	\quad
    		u_{1}^{(t)} = -\mathbb{P}(\sum_{i=1}^4 a_i^2 Q_{i}) \, .
    \end{equation}
We refer the reader to Remark \ref{rmk:scalar} for the definition of $\mathcal{R}$.

From now on, in order to simplify our notation, for any
function space $X$ and any map $f$ which depends on $t$ and $x$, we will write $\| f \|_X$ meaning $\| f \|_{L^\infty(X)}$.

\subsection{Estimate on $\| u_{1} - u_0 \|_{L^2}$ and on $\| u_{1} - u_0 \|_{L^1}$}\label{subs:u1-u0}
By the triangular inequality, 
$$
\| u_{1} - u_0 \|_{L^2} \leq \|u_{1}^{(p)}  \|_{L^2} + \|u_{1}^{(c)} \|_{L^2}  + \|u_{1}^{(t)} \|_{L^2}  
$$
and we estimate the right-hand side separately as
\begin{align}
\label{eqn:u1-u0L2_1}
\|u_{1}^{(p)}  \|_{L^2} 
& \leq \sum_{i=1}^4 \| a_i (W_{i}^p + W_{i}^c) \|_{L^2} 
\\& \leq \sum_{i=1}^4\left(    \| a_i \|_{L^2} \| W_{i}^p + W_{i}^c\|_{L^2} + C \frac{ \|a_i \|_{C_1} \| W_{i}^p + W_{i}^c\|_{L^2}}{\lambda^{1/2}}  \right) 
\\& \le \| R_0 \|_{L^1} + \eps^{1/2}C(t_0, \| R_0 \|_{C_1}) \, ,
\end{align}
where in the second line we used the improved Holder inequality \eqref{eqn:impr-holder} and (iii) in Proposition \ref{lemma: building blocks}.

From Remark \ref{rmk:scalar} we deduce
\begin{equation}
\label{eqn:u1-u0L2_2}
 \|u_{1}^{(c)} \|_{L^2}
 \leq 
 C\eps \sum_{i=1}^4 \|a_i \|_{C_2} \| W_{i}^p + W_{i}^c\|_{L^2}
 \le \eps C(t_0, \|R_0\|_{C^2})
  \, .
\end{equation}
Finally we employ (iv) in Proposition \ref{lemma: building blocks} to get

\begin{equation}
\label{eqn:u1-u0L2_3}
\|u_{1}^{(t)} \|_{L^2}
\leq
\sum_{i=1}^4 \|a_i \|_{L^\infty} \| Q_{i} \|_{L^2} 
\le \eps C(t_0, \| R_0\|_{L^\infty}) \, .
\end{equation}

Analogously
\begin{align}\label{eqn:u1-u0L1}
  \| u_{1} - u_0 \|_{L^1 }& \le
  \sum_{i=1}^4 \left( \| u_1^{(p)} \|_{L^1} +  \| u^{(c)}_1 \|_{L^1} + \| u^{(t)}_1 \|_{L^1}\right)
  \\& \le 
  C\sum_{i=1}^4   (\| a_i \|_{L^\infty} \| W_{i}^p + W_{i}^c\|_{L^1} +  \| u^{(c)}_1 \|_{L^2} + \| u^{(t)}_1 \|_{L^2} )
  \\& \le
   \eps C(t_0, \|R_0\|_{C^2}) \, .
\end{align}

\subsection{Estimate on $\| \curl(u_{1} - u_0 )\|_{L^{1,\infty}}$}
By triangular inequality,
\begin{align*}
	\| & \curl(u_{1} - u_0) \|_{L^{1,\infty}} \\& \le
	C\sum_{i=1}^4 \left( \| D(  a_i(W_{i}^p + W_{i}^c)) \|_{L^{1,\infty}} +  \| D\mathcal{R}(\nabla a_i \cdot (W_{i}^p + W_{i}^c) \|_{L^{1,\infty}} + \| \text{curl}\mathbb{P}(a_i Q_{i}) \|_{L^{1,\infty}}\right) \, ,
\end{align*}
we estimate the right-hand side separately as
\begin{equation}
	\| D(  a_i(W_{i}^p + W_{i}^c)) \|_{L^{1,\infty}}
	\le
    \| a_i \|_{C_1} \| (W_{i}^p + W_i^c) \|_{L^1} + \| a_i\|_{L^\infty} \| D (W_{i}^p + W_i^c) \|_{L^{1,\infty}}
    \le 
    \eps C(t_0, \| R_0 \|_{C^1}) \, ,
\end{equation}
\begin{equation*}
	\| \text{curl} \, \mathbb{P}(a_i Q_{i}) \|_{L^{1,\infty}}
	=
	\| \text{curl}(a_i Q_{i}) \|_{L^{1,\infty}}
	\le C \| a_i \|_{C_1} \| Q_{i} \|_{L^1} + \| a_i\|_{L^\infty} \| D Q_{i} \|_{L^{1,\infty}} 
	\le \eps C(t_0, \| R_0\|_{C^1})\, ,
\end{equation*}
where we employed (iv) in Proposition \ref{lemma: building blocks}.
Using that $D\mathcal{R}$ is a Calderon-Zygmund operator we deduce
\begin{equation}
	\| D\mathcal{R}(\nabla a_i \cdot (W_{i}^p + W_{i}^c)) \|_{L^{1,\infty}} 
	\le C \| \nabla a_i \cdot (W_{i}^p + W_{i}^c) \|_{L^1}
	\le  \eps C(t_0, \| R_0 \|_{C^1})\,  .
\end{equation}

\section{New error}\label{sec:new error}
We define $R_1$ in such a way that
\begin{equation}
\label{eqn:R1}
\partial_t u_1 + \div ( u_1 \otimes u_1) + \nabla p_1 = \div( R_1) \, ,
\end{equation}
which, by subtracting the equation for $u_0$, is equivalent to
\begin{equation}\label{z12}
	\div(R_1) = \div( u_0 \otimes (u_1-u_0) +  (u_1-u_0) \otimes u_0 + (u_1-u_0) \otimes (u_1-u_0) + R_0) + \partial_t (u_1 - u_0) + \nabla (p_1 - p_0) \, .
\end{equation}
We are going to define 
$$
R_1 := R_1^{(l)} + R_1^{(t)} + R_1^{(q)} \, ,
$$
where the various addends are defined in the following paragraphs, and show that
\begin{equation}\label{z13}
	\| R^{(l)}_1\|_{L^1} + \| R^{(t)} \|_{L^1} +  \| R^{(q)} \|_{L^1} \le \eps C(t_0, \| R_0\|_{C^3}) \, .
\end{equation}
The proof of Proposition \ref{prop: iterative} will follow by choosing $\eps$ small enough.

\subsection{Linear error} Let us set
\begin{equation}\label{eq:linear error}
	R^{(l)}_1 :=  u_0 \otimes (u_1-u_0) +  (u_1-u_0) \otimes u_0 \, ,
\end{equation}
thanks to \eqref{eqn:u1-u0L1} it holds
$$
\| R_1^{(l)}\|_{L^1}
\leq 2\|u_0\|_{L^\infty} \|u_1-u_0\|_{L^1} \leq \eps C(t_0, \| R_0 \|_{C^2}) \, .
$$

\subsection{Temporal error} 
Let us set
\begin{align*}
	R^{(t)}_1 & := \mathcal{R}( \partial_t a_i \cdot (W_{i}^p + W_{i}^c)) +  a_i A_i - \mathcal{R}( \nabla a_i \cdot A_{i} ) 
	\\& \quad + \mathcal{R}_0 \mathcal{R}( \partial_t(\nabla a_i) \cdot (W_{i}^p + W_{i}^c))
	+ \mathcal{R}_0 \mathcal{R}(\nabla a_i \cdot \div(A_i))
	-\mathcal{R}_0\mathbb{P}(\sum_{i=1}^4 \partial_t a_i^2 Q_{i}) \, .
\end{align*}
Using that
\begin{equation}
	\partial_t u_1^{(t)} 
	=
	-\mathbb{P}(\sum_{i=1}^4 \partial_t a_i^2 Q_{i}) 
	-
	\mathbb{P}(\sum_{i=1}^4 a_i^2 \div(W_i^p \otimes W^p_i))
	=
	-\mathbb{P}(\sum_{i=1}^4 \partial_t a_i^2 Q_{i}) 
	-
	\sum_{i=1}^4 a_i^2 \div( W_i^p \otimes W^p_i)
	- \nabla P \, ,
\end{equation}
for some pressure term $P$, it is immediate to verify the identity
\begin{equation}\label{eq: temporal}
	\partial_t (u_1 - u_0) = \div(R^{(t)}_1) - \sum_{i=1}^4 a_i^2 \div( W_i^p \otimes W^p_i)
	- \nabla P \, .
\end{equation}
Since $\mathcal R$ and $\mathcal{R}_0$ send $L^1$ to $L^1$ (cf. Lemma \ref{lemma23} and Remark \ref{rmk:scalar}), we have that
\begin{align*}
	\| \mathcal{R}( \partial_t a_i \cdot (W_{i}^p + W_{i}^c)) \|_{L^1} + \|  \mathcal{R}_0\mathcal{R}( \partial_t\nabla a_i \cdot (W_{i}^p + W_{i}^c))\|_{L^1} \leq 2\| a\|_{C^2} \|W_{i}^p + W_{i}^c \|_{L^1} \leq \eps C(t_0, \| R_0 \|_{C^2}) \, .
\end{align*}
\begin{equation}
	\| \mathcal{R}_0\mathbb{P}(\sum_{i=1}^4 \partial_t a_i^2 Q_{i}) \|_{L^1}
	\le\sum_{i=1}^4 \| \partial_t a_i^2 Q_{i} \|_{L^2}
	\le 
	\sum_{i=1}^4 \| \partial_t a_i^2 \|_{L^\infty}\| Q_{i} \|_{L^2}
	\le \eps C(t_0, \| R_0 \|_{C^1}) \, .
\end{equation}
From (iv) in Proposition \ref{lemma: building blocks} we get
\begin{equation}
	\|  a_i A_i\|_{L^1} + \|\mathcal{R}( \nabla a_i \cdot A_{i} ) \|_{L^1}
	\le 2\| a_i \|_{C^1} \| A_i \|_{L^1}
	\le \eps C(t_0, \| R_0\|_{C^1}) \, .
\end{equation}
By employing \eqref{eqn:R-di-div} we bound
\begin{equation}
	\| \mathcal{R}_0\mathcal{R}(\nabla a_i \cdot \div(A_i)) \|_{L^1}
	\le C\| a_i \|_{C^3} \| A_i \|_{L^1}
	\le \eps C(t_0, \|R_0\|_{C^3})  \, .
\end{equation}

\subsection{Quadratic error terms}
Let us set
\begin{equation}
	R_1^{(q)} 
	= (u_1 - u_0)\otimes (u_1 - u_0) - \sum_{i=1}^4
	a_i^2 W_{i}^p\otimes W_{i}^p 
	+
	\sum_{i=1}^4\mathcal{R}\Big( \nabla a_i^2 \cdot \Big(W_i^p \otimes W^p_i  - \int_{\TT^2} W^p_i \otimes W_i^p\Big) \Big) \, ,
\end{equation}
and show that \eqref{z12} holds. In view of \eqref{eq:linear error}, \eqref{eq: quadratic identity} and \eqref{eq: temporal} it amounts to check that
\begin{equation}
	\div(R_1^{(q)}) =  \div \Big( (u_1-u_0) \otimes (u_1-u_0) - \sum_{i=1}^4 a_i^2 \Big(\frac{\xi_i}{|\xi_i|} \otimes \frac{\xi_i}{|\xi_i|} \Big)\Big) - \sum_{i=1}^4 a_i^2 \div( W_i^p \otimes W^p_i)  + \nabla (p_1 - p_2) \, .
\end{equation}
The latter easily follows by noticing that, as a consequence of (ii) in Proposition \ref{lemma: building blocks}, one has
\begin{align*}
	\sum_{i=1}^4 \nabla a_i^2 \cdot \Big(W_i^p \otimes W^p_i  - \int_{\TT^2} W^p_i \otimes W_i^p \Big)
	& =
		\sum_{i=1}^4 \nabla a_i^2 \cdot \Big(W_i^p \otimes W^p_i  - \int_{\TT^2} W^p_i \otimes W_i^p \Big) 
	\\& =
	\sum_{i=1}^4   \div\Big( a_i^2 \Big(W_i^p \otimes W^p_i  - \frac{\xi_i}{|\xi_i|} \otimes \frac{\xi_i}{|\xi_i|} \Big)\Big) - \sum_{i=1}^4 \div(a_i^2 W_i^p \otimes W^p_i) \, .
\end{align*}

\bigskip

Let us finally prove that $\| R_1^{(q)}\|_{L^1} \le \eps C(t_0, \| R_0\|_{C^2})$. We begin by observing that
\begin{align*}
	& (u_1 - u_0) \otimes  (u_1-u_0) - \sum_{i=1}^4
	a_i^2 W_{i}^p\otimes W_{i}^p 
	\\& =
	\sum_{i=1}^4 (
	a_i^2 W_{i}^p\otimes W_{i}^c + a_i^2W_{i}^c\otimes W_{i}^p +
	a_i^2 W_{i}^c\otimes W_{i}^c) 
	 + ( u_{1}^{(c)} + u_{1}^{(t)}) \otimes ( u_1-u_0)+  ( u_1-u_0) \otimes ( u_{1}^{(c)} + u_{1}^{(t)}) \, ,
\end{align*}

From (iv) in Proposition \ref{lemma: building blocks}, the estimates in Section~\ref{subs:u1-u0} on $ \|  u_{1}^{(c)}\|_{L^2},\| u_{1}^{(t)} \|_{L^2} ,
\| u_1-u_0\|_{L^2}$  and Lemma~\ref{lemma23} we deduce
\begin{equation}
	\| 
	a_i^2 W_{i}^p\otimes W_{i}^c + a_i^2W_{i}^c\otimes W_{i}^p +
	a_i^2 W_{i}^c\otimes W_{i}^c \|_{L^1}
	\le \| a_i \|_{L^\infty}( 2 \|W_i^p\|_{L^2}\| W_i^c\|_{L^2} + \| W_i^c\|_{L^2}^2)
	\le \eps C(t_0, \| R_0\|_{L^\infty}) \, ,
\end{equation}
\begin{equation}
\begin{split}
\| ( u_{1}^{(c)} + u_{1}^{(t)}) \otimes ( u_1-u_0)+  ( u_1-u_0) \otimes ( u_{1}^{(c)} + u_{1}^{(t)})\|_{L^1} &\leq 2 \|  u_{1}^{(c)} + u_{1}^{(t)} \|_{L^2} 
\| u_1-u_0\|_{L^2}
\leq %(C\| R_0 \|_{L^1} + \eps^{1/2}C(t_0, \| R_0 \|_{C_1})  ) 
\eps C(t_0, \| R_0 \|_{C_2}),
\end{split}
\end{equation}
\begin{equation}
	\| \mathcal{R}( \nabla a_i^2 \cdot (W_i^p \otimes W^p_i  - \int_{\TT^2} W^p_i \otimes W_i^p) ) \|_{L^1}
	\le
	 C \eps \| \nabla a_1\|_{C^1} \|W_i^p \otimes W^p_i\|_{L^1}
	 \le \eps C(t_0, \| R_0 \|_{C^2}) \, .
\end{equation}

\bibliographystyle{plain}
\bibliography{NSE}
\end{document}